\newcommand{\lra}{\longrightarrow}
\newcommand{\bP}{\mathbb{P}}
\newcommand{\bC}{\mathbb{C}}
\newcommand{\PP}{\mathbb{P}}
\newcommand{\Kb}{{\overline K}}
\newcommand{\bK}{{\overline K}}
\newcommand{\Kbv}{{\overline K}_v}
\newcommand{\bKv}{{\overline K}_v}
\newcommand{\hhat}{\hat{h}}
\newcommand{\cyr}{%
        \renewcommand{\rmdefault}{wncyr}%
        \renewcommand{\sfdefault}{wncyss}%
        \renewcommand{\encodingdefault}{OT2}%
        \normalfont
        \selectfont}
\DeclareTextFontCommand{\textcyr}{\cyr}
\DeclareMathOperator{\Gal}{Gal}
\newcommand{\Kbar}{\,\overline{\! K}}
\newcommand{\PNhyp}{(P,N,v,\infty)}
\newcounter{rmenum}
\newcounter{alphenum}
\newtheorem{thm}{Theorem}[subsection]
\newtheorem{lem}[thm]{Lemma}
\newtheorem{prop}[thm]{Proposition}
\newtheorem{conj}[thm]{Conjecture}
\theoremstyle{definition}
\newtheorem*{defn}{Definition}
\begin{document}


\newpage
%


\thispagestyle{empty}

\title[A finiteness property for Chebyshev polynomials]{A finiteness
    property for preperiodic points of Chebyshev polynomials}
 
\author{Su-Ion Ih} 
\address{Su-Ion Ih\\ 
Department of Mathematics \\ 
University of Colorado at Boulder \\ 
Campus Box 395 \\ 
Boulder, CO 80309-0395 \\ 
USA} 
\email{ih@math.colorado.edu}

\author{Thomas Tucker} 
\address{Thomas Tucker\\ 
Department of Mathematics\\ 
University of Rochester\\
Rochester, NY 14627}
\email{ttucker@math.rochester.edu} 
 
\subjclass[2000]{Primary 11G05, 11G35, 14G05, 37F10, Secondary
11J86, 11J71, 11G50} 

\keywords{Chebyshev polynomials, equidistribution, integral points, preperiodic points}

\thanks{The second author was partially supported by NSA
    Grant 06G-067.}

\begin{abstract} 
  Let $K$ be a number field with algebraic closure $\overline K$, let
  $S$ be a finite set of places of $K$ containing the archimedean
  places, and let $\varphi$ be a Chebyshev polynomial.  We prove that
  if $\alpha \in \overline K$ is not preperiodic, then there
  are only finitely many preperiodic points $\beta \in \overline K$
  which are $S$-integral with respect to $\alpha$.
\end{abstract} 
\maketitle

\section{Introduction}

Let $K$ be a number field with algebraic closure $\overline K$, let
$S$ be a finite set of places of $K$ containing the archimedean
places, and let $\alpha, \beta \in \bK$.  We say that $\beta$ is
$S$-integral relative to $\alpha$ if no conjugate of $\beta$ meets any
conjugate of $\alpha$ at primes lying outside of $S$.  More precisely,
this means that for any prime $v \notin S$ and any $K$-embeddings
$\sigma:K(\alpha) \lra {\overline K_v}$ and $\tau: K(\alpha) \lra
{\overline K_v}$, we have
\begin{equation*} 
\left\{ \begin{array}{ll} 
|\sigma(\beta)-\tau(\alpha)|_v \ge 1 & \text{if $|\tau(\alpha)|_v \le 1$\ ; \text{and} } \\ 
|\sigma(\beta)|_v \le 1 & \text{if $|\tau(\alpha)|_v > 1$\ .} 
       \end{array} \right. 
\end{equation*} 
Note that this definition extends naturally to the case where $\alpha$
is the point at infinity.  We say that $\beta$ is $S$-integral
relative to the point at infinity if $|\sigma(\beta)|_v \leq 1$ for
all $v \notin S$ and all $K$-embeddings $\sigma:K(\beta) \lra {\overline
  K}_v$.  Thus, our $S$-integral points coincide with the usual
$S$-integers when $\alpha$ is the point at infinity.  \vskip .05 in

In \cite{BIR}, the following conjecture is made.
\begin{conj}[Ih]\label{Ih conj}
  Let $K$ be a number field, and 
  let $S$ be a finite set of places of $K$ that contains all the
  archimedean places. If $\varphi: \bP^1_K \lra \bP^1_K$ is a
  nonconstant rational function of degree $d > 1$ and $\alpha \in
  \PP^1(K)$ is non-preperiodic for $\varphi$, then there are at most
  finitely many preperiodic points $\beta \in \PP^1(\Kbar)$ that are
  $S$-integral with respect to $\alpha$.
\end{conj}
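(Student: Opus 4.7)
\textbf{Proof proposal for Conjecture~\ref{Ih conj}.} The natural line of attack is via arithmetic equidistribution of small points on Berkovich projective lines. Assume, for contradiction, that there is an infinite sequence $\{\beta_n\} \subset \PP^1(\bK)$ of preperiodic points, with $\#\Gal(\bar K/K)\cdot\beta_n \to \infty$, each $S$-integral with respect to $\alpha$. For every place $v$ of $K$, let $\mu_{\varphi,v}$ be the canonical $\varphi$-invariant probability measure on the Berkovich space $\PP^{1,\mathrm{an}}_{K_v}$ (the equilibrium/Brolin--Lyubich measure at archimedean $v$; the Chambert-Loir measure at non-archimedean $v$), and let $\mu_n^v$ denote the Galois-averaged discrete measure supported on the orbit of $\beta_n$. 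Since preperiodic points have canonical height zero, the Baker--Rumely/Chambert-Loir/Favre--Rivera-Letelier/Yuan equidistribution theorems give $\mu_n^v \to \mu_{\varphi,v}$ weakly at every $v$.

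The plan is then to pair this with the $S$-integrality hypothesis via a product-formula calculation. Fix $\alpha$ and let $f_v(z) = -\log \delta_v(z,\alpha)$, where $\delta_v$ is the $v$-adic chordal/Arakelov distance. On the one hand, the product formula applied to the $v$-adic resultants $\prod_{\sigma,\tau}(\sigma\beta_n - \tau\alpha)$, together with $S$-integrality, forces the sum over places of the pairings $\int f_v \, d\mu_n^v$ to be bounded above by a quantity coming only from the finite set $S$, which in turn is $o(1)$ as $n \to \infty$ (after dividing by the degree of the orbit, using that the $\beta_n$ are preperiodic and hence have bounded absolute value at every $v$). On the other hand, passing to the limit under equidistribution should yield
\[
\sum_v \int f_v\, d\mu_{\varphi,v}(z) \;\le\; 0.
\]
The left-hand side, however, is exactly (a positive multiple of) the canonical height $\hhat_\varphi(\alpha)$ via Mahler/Call--Silverman-type integral formulas for $\hhat_\varphi$ against the canonical measures, and $\hhat_\varphi(\alpha) > 0$ because $\alpha$ is non-preperiodic. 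This is the desired contradiction.

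The hard step---and the reason the conjecture is open in this generality---is the passage to the limit, because $f_v(z)$ has a logarithmic singularity at (the conjugates of) $\alpha$ and so is not a continuous test function. One needs either (i) a quantitative equidistribution result controlling the concentration of $\mu_n^v$ near $\alpha$, so that the singularity contributes negligibly after truncation, or (ii) the fact that $\mu_{\varphi,v}$ does not charge $\alpha$ \emph{together with} uniform estimates on $\mu_n^v$-mass of small balls around $\alpha$ at every place. For the Chebyshev maps treated in this paper, the canonical measure is the explicit arcsine measure on $[-2,2]$ at archimedean places and has an explicit description at non-archimedean places, which makes the truncation estimates tractable; for general $\varphi$ and especially at places of bad reduction, controlling $\mu_n^v$ near $\alpha$ uniformly in $n$ is the essential obstacle.
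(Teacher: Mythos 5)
You are addressing a statement that the paper itself presents as an open conjecture, not as a theorem it proves: the paper establishes it only for Chebyshev polynomials (Theorem~\ref{main}), and the general case remains open. Your proposal is accordingly a strategy outline rather than a proof, and you say as much yourself. As an outline it is accurate and matches the architecture the paper uses in the special case it handles: equidistribution of Galois orbits of preperiodic points against the canonical measure at each place, a product-formula computation, the $S$-integrality hypothesis to confine the nonzero local contributions to the finite set $S$ and justify interchanging the limit with the sum over places, and the identification of $\sum_v \int \log|z-\alpha|_v\, d\mu_{\varphi,v}$ with $\hhat(\alpha) > 0$. You also correctly locate the genuine gap: $\log|z-\alpha|_v$ is not a continuous test function when $\alpha$ lies in the support of the canonical measure, so weak equidistribution alone does not permit the passage to the limit.

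What your proposal does not supply --- and what constitutes the actual mathematical content of the paper --- is a mechanism for controlling the mass of the orbit measures near $\alpha$. In the Chebyshev case the paper does this with two concrete inputs: a counting lemma (Lemma~\ref{lem;counting}) giving quantitative equidistribution of the angles $a/N$ of the roots of unity in short intervals, with an explicit power-saving error term; and Baker's theorem on linear forms in logarithms (Theorem~\ref{thm;baker}), which shows that no conjugate $\sigma(x_n)$ of a preperiodic point can lie closer than $N^{-C}$ to $\alpha$, so that $\log|\sigma(x_n)-\alpha|$ is bounded below by $-C\log N$ while the degree of $x_n$ grows like $\phi(N) \geq \sqrt{N}$. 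These two estimates together make the contribution of the conjugates near $\alpha$ negligible. Your items (i) and (ii) are precisely placeholders for such estimates, but without an analogue of them for a general rational map --- in particular, a Diophantine lower bound on the distance from $\alpha$ to points of small height, which is not known in general --- the argument does not close. In short: you have correctly reproduced the strategy and correctly named the obstruction, but you have not proved the conjecture, and neither does the paper.
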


In \cite{BIR}, it is proved that this conjecture holds when $\varphi$
is a multiplication-by-$n$ (for $n \geq 2$) map on an $\mathbb G_m$ or
on an elliptic curve.  Recently, Petsche \cite{clay} has proved the
conjecture in the case where the point $\alpha$ is in the $v$-adic
Fatou set at every place of $K$.  A similar problem, dealing with
points in inverse images of a single point rather than with
preperiodic points, has been treated by Sookdeo \cite{vijay}.

In this paper, we show that this conjecture is true for Chebyshev
polynomials. That is, we prove the following, where we note that
$\alpha$ may lie on the Julia set.


\begin{thm}\label{main} Let $\varphi$ be a Chebyshev polynomial. 
  Let $K$ be a number field, and let $S$ be a finite set of places of
  $K$, containing all the archimedean places. Suppose that $\alpha \in K$
  is not of type $\zeta + \zeta^{-1}$ for any root of unity $\zeta$.
  Then the following set
$$ {\mathbb A^1}_{\varphi, \alpha, S} :=
\{ x \in {\overline {\mathbb Q}} : x \; 
\textup{{\em{is $S$-integral with respect to}}}
\; \alpha \; \textup{{\em{and is}}} \; 
\textup{${\varphi}$-{\em{preperiodic}}} \}
$$
is finite.
\end{thm}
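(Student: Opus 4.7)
The plan is to exploit the standard semi-conjugacy between Chebyshev dynamics on $\mathbb{A}^1$ and the power map on $\mathbb{G}_m$ to reduce Theorem~\ref{main} to the case of Conjecture~\ref{Ih conj} for $\mathbb{G}_m$, which was established in \cite{BIR}. Concretely, the degree-$2$ quotient $\pi: \mathbb{G}_m \to \mathbb{A}^1$ given by $\pi(z) = z + z^{-1}$ satisfies $\varphi \circ \pi = \pi \circ \psi$, where $\psi(z) = z^d$ with $d = \deg \varphi$; in particular the $\varphi$-preperiodic points in $\mathbb{A}^1$ are exactly the elements $\zeta + \zeta^{-1}$ with $\zeta$ a root of unity.

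The first step is to lift $\alpha$ through $\pi$: choose $\alpha_0 \in \overline{K}$ with $\alpha_0 + \alpha_0^{-1} = \alpha$, so that $\alpha_0$ generates a number field $K' \subseteq K(\sqrt{\alpha^2-4})$ of degree at most $2$ over $K$, and the hypothesis on $\alpha$ is exactly the statement that $\alpha_0$ is not a root of unity. Let $S'$ denote the places of $K'$ lying over those in $S$. The key algebraic identity is
\[
(\zeta + \zeta^{-1}) - (\alpha_0 + \alpha_0^{-1}) \;=\; \frac{(\zeta - \alpha_0)(\zeta - \alpha_0^{-1})}{\zeta},
\]
and it converts $S$-integrality on $\mathbb{A}^1$ into $S$-integrality on $\mathbb{G}_m$. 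Indeed, for a non-archimedean place $w$ of $K'$ at which $\alpha_0$ is a unit (equivalently $|\alpha|_w \le 1$) and any $K'$-embedding $\sigma: K'(\zeta) \to \overline{K'_w}$, the fact that $|\sigma(\zeta)|_w = 1$ combined with the identity gives
\[
|\sigma(\beta) - \alpha|_w \;=\; |\sigma(\zeta) - \alpha_0|_w \cdot |\sigma(\zeta) - \alpha_0^{-1}|_w,
\]
each factor being $\le 1$ by the ultrametric inequality. If $\beta = \zeta + \zeta^{-1}$ is $S$-integral with respect to $\alpha$, the left-hand side is $\ge 1$, forcing both factors to equal $1$; in particular $|\sigma(\zeta) - \alpha_0|_w = 1$.

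At the remaining finitely many non-archimedean places of $K'$ where $\alpha_0$ is not a unit, the $S$-integrality condition on $\beta$ reads $|\sigma(\beta)|_w \le 1$, which is automatic since $\sigma(\zeta)$ is a root of unity. Enlarging $S'$ by these bad places to a finite set $S''$, the above shows that every $\zeta$ arising from an $S$-integral $\varphi$-preperiodic $\beta$ is itself $S''$-integral with respect to $\alpha_0$ in the sense of the introduction, now applied on $\mathbb{A}^1$ under the dynamics of $\psi$ with $\alpha_0 \in K'$ non-preperiodic. The $\mathbb{G}_m$ case of Conjecture~\ref{Ih conj}, proved in \cite{BIR}, then yields finitely many such $\zeta$, hence finitely many $\beta = \pi(\zeta)$. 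The main technical obstacle is the bookkeeping in the reduction: verifying the identity embedding-by-embedding, cleanly handling the (possibly quadratic) extension $K'/K$ and the corresponding enlargement of $S$, and confirming that the integrality condition one deduces on $\mathbb{G}_m$ matches the hypothesis of the $\mathbb{G}_m$ result in \cite{BIR}; once these are in place, the finiteness is immediate.
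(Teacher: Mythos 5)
Your proposal is correct, but it takes a genuinely different route than the paper. The paper proves the local statement Theorem~\ref{comp} directly for the Chebyshev map: it re-runs the Bilu/Baker--Rumely--Chambert-Loir--Favre--Rivera-Letelier equidistribution argument against the Chebyshev invariant measure $\frac{1}{\pi\sqrt{4-x^2}}\,dx$, applies Baker's theorem on linear forms in logarithms to control the archimedean sum when $\alpha$ lies on the Julia set $[-2,2]$, and then deduces Theorem~\ref{main} from Theorem~\ref{comp} by the product-formula contradiction. You instead use the semi-conjugacy $\pi(z)=z+z^{-1}$, $\varphi\circ\pi=\pi\circ(z\mapsto z^d)$, together with the factorization
\[
(\zeta+\zeta^{-1})-(\alpha_0+\alpha_0^{-1})=\frac{(\zeta-\alpha_0)(\zeta-\alpha_0^{-1})}{\zeta}
\]
to transport $S$-integrality from $\mathbb{A}^1$ under $\varphi$ to $\mathbb{G}_m$ under $z\mapsto z^d$, and then cite the $\mathbb{G}_m$ case of the conjecture already proved in \cite{BIR}.

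Your reduction is sound. Passing to $K'=K(\alpha_0)$ is harmless (it can only enlarge the set of embeddings being tested); at a place $w$ of $K'$ outside your enlarged $S''$, the unit $\alpha_0$ makes both factors $|\tau(\zeta)-\alpha_0|_w$, $|\tau(\zeta)-\alpha_0^{-1}|_w$ at most $1$, and the $S$-integrality of $\beta$ forces their product to be at least $1$, so both are $=1$; in particular $\zeta$ is integral with respect to $\alpha_0$. (In fact you do not strictly need the enlargement to $S''$: at places where $\alpha_0$ is not a unit, $|\tau(\zeta)-\alpha_0|_w=\max(1,|\alpha_0|_w)\ge 1$ or $|\tau(\zeta)|_w=1\le 1$ automatically since $\zeta$ is a unit, so integrality for $\zeta$ holds there too; but enlarging is clean and costs nothing.) The hypothesis ``$\alpha$ not of the form $\zeta+\zeta^{-1}$'' is exactly ``$\alpha_0$ not a root of unity,'' i.e.\ $\alpha_0$ non-preperiodic for $z\mapsto z^d$, which is the hypothesis \cite{BIR} needs. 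One pleasant feature of your approach is that the singular Chebyshev measure (the ``slight twist'' the paper flags, with density blowing up near $\pm 2$) never has to be confronted: under $\pi$ it is just the pushforward of the uniform measure on the circle, and the singularity disappears once you work upstairs. The trade-off is that your argument proves only the finiteness statement Theorem~\ref{main}; it does not recover the place-by-place limit formula of Theorem~\ref{comp}, which is the quantitative heart of the paper and may be of independent interest. It also does not directly adapt to dynamical systems lacking such a semi-conjugacy, whereas the paper's method is a template for attacking Conjecture~\ref{Ih conj} more broadly. For the $Q_m$ variant at the end of the paper one would need to adjust the semi-conjugacy (the relevant map is $z\mapsto z-z^{-1}$ and the lift is a root of unity times a power map), which is doable but is a separate check.
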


This will follow easily from the following theorem.

\begin{thm}\label{comp}
Let $( x_n )_{n=1}^\infty$ be a nonrepeating sequence of preperiodic
points for a Chebyshev polynomial $\varphi$.  Then for any non-preperiodic $\alpha$
in a number field $K$ and any place $v$ of $K$, we have
\begin{equation}\label{v}
\hhat_v(\alpha) =  \lim_{n \to \infty} \frac{1}{ [ K(x_n) : K ] }  \sum_{\sigma:
  K(x_n)/K \hookrightarrow \overline{K}_v } \log | \sigma (x_n) - \alpha
|_v,
\end{equation}
where $\sigma: K(x_n)/K \hookrightarrow \overline K_v$ means
that $\sigma$ is an embedding of $K(x_n)$ into $\overline K_v$,
fixing $K$, here and in what follows.
\end{thm}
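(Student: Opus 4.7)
The plan is to exploit the standard semiconjugacy $\pi \colon \mathbb{G}_m \to \mathbb{A}^1$ defined by $\pi(t) = t + t^{-1}$, under which the Chebyshev polynomial $\varphi = T_d$ pulls back to the power map $[d]$, i.e.\ $T_d \circ \pi = \pi \circ [d]$. Because the preperiodic points of $T_d$ are precisely the images of roots of unity, I would write each $x_n = \zeta_n + \zeta_n^{-1}$ with $(\zeta_n)$ a nonrepeating sequence of roots of unity of strictly increasing order. Writing $\alpha = \beta + \beta^{-1}$ for a root $\beta$ of $t^2 - \alpha t + 1$, the hypothesis that $\alpha$ is not preperiodic translates into $\beta$ not being a root of unity. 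The key algebraic identity
\[
x_n - \alpha = (\zeta_n - \beta)(\zeta_n - \beta^{-1}) \zeta_n^{-1},
\]
combined with $|\zeta_n|_v = 1$, rewrites the right-hand side of \eqref{v} as
\[
\lim_{n \to \infty} \frac{1}{[K(\zeta_n):K]} \sum_\tau \bigl( \log|\tau(\zeta_n) - \beta|_v + \log|\tau(\zeta_n) - \beta^{-1}|_v \bigr),
\]
after converting from embeddings of $K(x_n)$ to embeddings of $K(\zeta_n)$; the indices differ by a factor of $1$ or $2$ which cancels exactly against the multiplicity of extensions (noting that $\zeta_n \mapsto \zeta_n^{-1}$ preserves the sum inside the parentheses).

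I would then invoke equidistribution of Galois orbits of roots of unity at $v$: Bilu's theorem at an archimedean place gives weak convergence of the measures $\frac{1}{[K(\zeta_n):K]} \sum_\tau \delta_{\tau(\zeta_n)}$ to normalized Haar measure on the unit circle, while the analogue on the Berkovich projective line at a non-archimedean place (a special case of the equidistribution of Chambert-Loir, Baker--Rumely, and Favre--Rivera-Letelier) gives the Dirac mass at the Gauss point. Integrating the continuous parts of $\log|t - \gamma|_v$ against these limit measures yields $\log^+|\gamma|_v$ by Jensen's formula in the archimedean case and by direct evaluation at the Gauss point in the non-archimedean case. Summing the contributions for $\gamma = \beta$ and $\gamma = \beta^{-1}$ produces $\log^+|\beta|_v + \log^+|\beta^{-1}|_v$, which is the standard closed form for $\hhat_v(\alpha)$ under Chebyshev dynamics, finishing the identification.

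The main obstacle is that $\log|t - \gamma|_v$ has a logarithmic singularity at $t = \gamma$, so weak convergence does not directly imply convergence of integrals against this test function. The standard remedy I would implement is a truncation: replace $\log|t - \beta|_v$ by $\max(\log|t - \beta|_v, -M)$, apply equidistribution to this continuous function, and control the truncation error
\[
\frac{1}{[K(\zeta_n):K]} \sum_{\tau \colon |\tau(\zeta_n) - \beta|_v < e^{-M}} \bigl(-M - \log|\tau(\zeta_n) - \beta|_v\bigr)
\]
uniformly in $n$, showing that it tends to $0$ as $M \to \infty$. Establishing this uniform bound is the principal technical content: since $\beta$ is not a root of unity, a Mahler-measure--type lower estimate (obtained via the product formula from comparable bounds at all the remaining places) prevents too many conjugates of $\zeta_n$ from clustering tightly around $\beta$ at $v$. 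Once this is in hand, passing $n \to \infty$ and then $M \to \infty$ yields \eqref{v}.
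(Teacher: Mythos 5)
Your overall framework is sound, and the semiconjugacy $t \mapsto t+t^{-1}$ is essentially what the paper uses too (it writes $x_n = \zeta_n + \zeta_n^{-1}$ and $\alpha = e^{2\pi i\theta_0} + e^{-2\pi i\theta_0}$ throughout). You also correctly single out the real difficulty: controlling the truncation error coming from the logarithmic singularity of $\log|t-\beta|_v$ when $\beta$ lies in the support of the limit measure, i.e.\ when $\alpha \in [-2,2]$ at an archimedean $v$. The gap is in the tool you propose for this. A Mahler-measure/product-formula estimate on $\Phi_N(\beta)=\prod_\tau(\tau(\zeta_n)-\beta)$ only gives $\log|\Phi_N(\beta)|_v \geq -C\phi(N)$, equivalently a Liouville-type lower bound $|\tau(\zeta_n)-\beta|_v \geq e^{-C\phi(N)}$ on an individual conjugate, and at best a clustering bound $\#\{\tau : |\tau(\zeta_n)-\beta|_v < e^{-t}\}\lesssim \phi(N)/t$. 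Plugging either into the truncation error
\[
\frac{1}{\phi(N)}\int_M^\infty \#\{\tau : |\tau(\zeta_n)-\beta|_v < e^{-t}\}\,dt
\]
does not produce something tending to $0$ as $M\to\infty$ uniformly in $n$: with a Liouville cutoff at $t\sim\phi(N)$ the contribution of the explicit counting error alone is $O(N^\gamma)$, and with the $\phi(N)/t$ clustering bound the integral $\int_M^\infty dt/t$ diverges. The product formula sees only the \emph{average} of $\log|\tau(\zeta_n)-\beta|_v$ and cannot prevent one conjugate from coming exponentially close to $\beta$.

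What the argument actually needs, and what the paper supplies, is a diophantine lower bound polynomial in $N$ rather than in $\phi(N)$: namely $|\tau(\zeta_n)-\beta|_v \geq N^{-C}$, i.e.\ a cutoff at $t\sim C\log N$. This is precisely Baker's theorem on linear forms in logarithms (Theorem~\ref{thm;baker}), applied as $|a/N - \theta_0| \geq N^{-C}$ with $e^{2\pi i\theta_0}$ algebraic and not a root of unity. One also needs a quantitative counting input (Lemma~\ref{lem;counting}, of Erd\H{o}s--Tur\'an type), since Bilu's qualitative weak convergence alone does not give an explicit bound on the number of conjugates in a shrinking arc. With both in hand, the contribution of the innermost slab $[\alpha-\delta/D,\alpha+\delta/D]$ is $\lesssim M(\delta/D)\log N + N^\gamma\log N/\phi(N)$, which tends to $0$ with $D=\lfloor N^{1/4}\rfloor$. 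The paper's introduction flags this explicitly as the step requiring Baker; without an input of that strength the case $\alpha$ on the Julia set cannot be closed, so your proof has a genuine gap there. (Your treatment of the non-archimedean places and of archimedean $v$ with $\alpha\notin[-2,2]$ is fine, though the paper handles the non-archimedean case by a more elementary direct argument.)
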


Indeed, we will prove Theorem~\ref{comp} slightly more generally
for any $\alpha \in K$ if $v \not | \infty$, while for any $\alpha \neq$
$-2$, 0, or 2 if $v | \infty$. (Note that the proof of Proposition~\ref{most}
actually works for any $\alpha \in [-2, 2] - \{-2, 0, 2 \}$.)  
The proof of Theorem~\ref{main} is then similar to the proof for 
$\mathbb G_m$ given in \cite{BIR}.
Specifically, the proof of Theorem~\ref{comp} breaks down into various
cases, depending on whether or not the place $v$ is finite or infinite
and whether or not the point $\alpha$ is in the Julia set at $v$.  The
fact that the invariant measure for Chebyshev polynomials is not
uniform on $[-2,2]$ provides a slight twist.

The proof of Theorem~\ref{comp} is fairly simple when $v$ is
nonarchimedean.  Likewise, when $v$ is archimedean but $\alpha$ is not
in the Julia set at $v$, the proof follows almost immediately from
an equidistribution result for continuous functions (see
\cite{bilu}).  When $v$ is archimedean and $\alpha$ is in the Julia
set at $v$, however, the proof becomes quite a bit more difficult.  In
particular, it is necessary to use A.~Baker's theorem on linear forms
in logarithms (see \cite{Baker}).  We note that in all cases, our
techniques are similar to those of \cite{BIR}.  

The derivation of Theorem~\ref{main} from Theorem~\ref{comp} goes as
follows: suppose, for contrary, that Theorem~\ref{main} were to be
false. Then we may further assume that the sequence
$(x_n)_{n=1}^\infty$ in Theorem\ref{comp} is a sequence of
$\alpha$-integral points. Then we have

\begin{eqnarray}
0
\ & < & \ 
\widehat h(\alpha) 
\nonumber\\
\ & = & \ 
\sum_{\text{places} \ v \ \text{of} \ K } 
\widehat h_v (\alpha) 
\nonumber\\
\ & = & \ 
\sum_{\text{places} \ v \ \text{of} \ K } 
\lim_{n \to \infty} \frac{1}{ [ K(x_n) : K ] }  
\sum_{\sigma: K(x_n)/K \hookrightarrow \overline{K}_v } \log | \sigma (x_n) - \alpha |_v
\nonumber\\
\ & = & 
\lim_{n \to \infty} \frac{1}{ [ K(x_n) : K ] }  
\sum_{\text{places} \ v \ \text{of} \ K } 
\sum_{\sigma:
  K(x_n)/K \hookrightarrow \overline{K}_v } \log | \sigma (x_n) - \alpha
|_v
\nonumber\\
\ & = & \ 
0,
\nonumber
\end{eqnarray}
where the equality on the third line comes from Theorem~\ref{comp},
the integrality hypothesis on the $x_n$ enables us to switch
$\sum_{\text{places} \ v \ \text{of} \ K }$ and $\lim_{n \to \infty}$
to get the equality on the fourth line, and the last equality is
immediate from the product formula. This is a contradiction.

\section{Preliminaries}

\subsection{The Chebyshev polynomials}

\begin{defn}
 $$
 P_1 (z) := z, \;\;\; P_2 (z) := z^2 - 2; \;\; \textup{and}
 $$ 
 $$
 P_{m+1} (z) + P_{m-1} (z) = z P_{m} (z) \;\; \textup{for all} \;
 m \geq 2. 
 $$
Then a \emph{Chebyshev polynomial} is defined to be any of the $P_{m}$
($m \geq 2$).
\end{defn}

These polynomials satisfy the following properties (see \cite[Section
7]{Milnor}).

\begin{enumerate}

\item For any $m \geq 1$, 
$P_m (\omega +\omega^{-1}) =  \omega^m +\omega^{-m}$, equivalently
$P_m (2 \cos \theta) = 2 \cos (m \theta)$,
where $\omega \in \mathbb C^{\times}$ and $\theta \in \mathbb R$. 

\item For any $\ell, m \geq 1$, $P_\ell \circ P_m =
  P_{\ell m}$.

\item For any $m \geq 3$, $P_m$ has $m-1$ distinct critical
points in the finite plane, but only two critical values, i.e., $\pm 2$. 

\end{enumerate}


\subsection{The dynamical systems of Chebyshev polynomials}

\begin{defn}
Let $\varphi$ be a Chebyshev polynomial. 
The dynamical system induced by
$\varphi$ on ${\mathbb P}^1$ (or ${\mathbb A}^1$) is 
called the \emph{(Chebyshev) dynamical system} with respect to $\varphi$
or the \emph{$\varphi$-dynamical system}. If $\varphi$ is clearly
understood from the context, we simply call it a  
\emph{Chebyshev dynamical system} without reference to $\varphi$.
\end{defn}

\begin{prop}
For any Chebyshev polynomial $\varphi$,
the Julia set of the dynamical system induced by
$\varphi$ (resp.~$- \varphi$) is \textup{[$-2$, 2]}, which is
naturally identified as a subset of the real line on 
the complex plane.
\end{prop}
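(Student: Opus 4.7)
The plan is to exploit the semi-conjugacy recorded in property (1): setting $\pi(\omega) = \omega + \omega^{-1}$ and $\mu_m(\omega) = \omega^m$, we have $\varphi \circ \pi = \pi \circ \mu_m$ on $\mathbb{C}^\times$. The map $\pi\colon \mathbb{C}^\times \to \mathbb{C}$ is surjective and carries the unit circle (the Julia set of $\mu_m$) onto the real interval $[-2,2]$. This reduces the entire question to the completely explicit dynamics of the $m$-th power map.

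First I would identify the filled Julia set $K(\varphi)$, that is, the set of $z \in \mathbb{C}$ whose forward orbit under $\varphi$ is bounded. Every $z \in \mathbb{C}$ can be written as $z = \omega + \omega^{-1}$ with $|\omega| \ge 1$ (uniquely, unless $z = \pm 2$), and the semi-conjugacy gives $\varphi^n(z) = \omega^{m^n} + \omega^{-m^n}$. If $|\omega| > 1$, this modulus tends to $\infty$; if $|\omega| = 1$, then $z = 2\cos\theta \in [-2,2]$ and by property (1) the whole orbit remains inside $[-2,2]$. Hence $K(\varphi) = [-2,2]$. Because $\varphi$ is a polynomial of degree $\ge 2$, its Julia set $J(\varphi)$ is the topological boundary of $K(\varphi)$ taken inside $\mathbb{C}$; since $[-2,2]$ has empty interior in $\mathbb{C}$, this boundary is all of $[-2,2]$, giving $J(\varphi) = [-2,2]$.

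It remains to transfer the conclusion to $-\varphi$, and I would reduce this to the case of $\varphi$ by splitting on the parity of $m$. If $m$ is odd, then $P_m$ is an odd polynomial, so $z \mapsto -z$ conjugates $\varphi$ to $-\varphi$; since this involution fixes $[-2,2]$ setwise, the Julia set of $-\varphi$ is again $[-2,2]$. If $m$ is even, then $P_m$ is even, and the identity $(-\varphi)\circ(-\varphi) = -P_m \circ P_m = -P_{m^2}$ extends by induction to $(-\varphi)^{2n} = -P_{m^{2n}}$ and $(-\varphi)^{2n+1} = -P_{m^{2n+1}}$. Boundedness of the orbit under $-\varphi$ therefore coincides with boundedness under $\varphi$, so $K(-\varphi) = K(\varphi) = [-2,2]$, and the same boundary argument gives $J(-\varphi) = [-2,2]$.

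The whole argument is quite short once the semi-conjugacy $\varphi\circ\pi = \pi\circ\mu_m$ is in hand; the only mildly delicate point, which I would call out explicitly, is keeping the two parities of $m$ separate when passing from $\varphi$ to $-\varphi$.
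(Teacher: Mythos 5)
The paper does not actually prove this proposition; it simply cites Milnor's book, so you are supplying a genuine argument where the authors do not. Your reduction via the semi-conjugacy $\varphi \circ \pi = \pi \circ \mu_m$ is exactly the right idea, and the computation of the filled Julia set $K(\varphi) = [-2,2]$ and the passage to the Julia set via $J(\varphi) = \partial K(\varphi)$ are correct and clean.

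However, your treatment of $-\varphi$ has a concrete error in the odd case: you claim that when $m$ is odd, conjugation by $\iota(z) = -z$ carries $\varphi$ to $-\varphi$. Compute instead: $(\iota \circ \varphi \circ \iota)(z) = -P_m(-z)$, and when $P_m$ is \emph{odd} this equals $-(-P_m(z)) = P_m(z)$, so $\iota$ commutes with $\varphi$ and the conjugation produces $\varphi$, not $-\varphi$. It is precisely when $P_m$ is \emph{even} (i.e.\ $m$ even) that $-P_m(-z) = -P_m(z)$, so the conjugation argument you want is valid for even $m$, not odd $m$. You have the parities reversed. The odd case is easily repaired: when $P_m$ is odd, one checks $(-\varphi)^n = (-1)^n \varphi^n$ (by induction, or just note $(-\varphi)^2 = -P_m \circ (-P_m) = \varphi^2$ using oddness), so $|(-\varphi)^n(z)| = |\varphi^n(z)|$ for all $n$ and hence $K(-\varphi) = K(\varphi)$, which gives $J(-\varphi) = [-2,2]$ by the same boundary argument. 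Once that correction is made the proof is sound, and it is arguably cleaner to notice that the iterate-modulus argument works uniformly in both parities, making the case split unnecessary.
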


\noindent {\bf {Proof.}} See \cite[Section 7]{Milnor}.

\vspace{0.15cm}

\begin{prop}~\label{prop;preper} 
  Let $\varphi$ be a Chebyshev polynomial.  Then the finite
  preperiodic points of the $\varphi$-dynamical system are 
  the elements of $\Kb$ of the form $\zeta + \zeta^{-1}$, 
  where $\zeta$ is a root of
  unity.
\end{prop}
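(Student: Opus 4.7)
The plan is to exploit the functional equation in property~(1), namely $P_m(\omega + \omega^{-1}) = \omega^m + \omega^{-m}$, together with the parametrization $x = \omega + \omega^{-1}$ of $\bar K$ by $\bar K^{\times}$ (the substitution is surjective, since $t^2 - xt + 1 = 0$ always has two reciprocal roots in $\bar K$, both nonzero). Write $\varphi = P_m$ for some $m \geq 2$.

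For the easy direction, suppose $x = \zeta + \zeta^{-1}$ with $\zeta$ a root of unity, say $\zeta^N = 1$. Iterating property~(1) and using property~(2) to the effect that $\varphi^{\circ k} = P_{m^k}$ yields
\[
\varphi^{\circ k}(x) \;=\; P_{m^k}(x) \;=\; \zeta^{m^k} + \zeta^{-m^k}.
\]
Since $\zeta^{m^k}$ always lies in the finite group of $N$-th roots of unity, the forward orbit of $x$ is finite, so $x$ is preperiodic.

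For the converse, let $x \in \bar K$ be preperiodic under $\varphi$, and write $x = \omega + \omega^{-1}$ for some $\omega \in \bar K^{\times}$. The same iteration formula $\varphi^{\circ k}(x) = \omega^{m^k} + \omega^{-m^k}$ holds, and preperiodicity of $x$ means the set $\{\omega^{m^k} + \omega^{-m^k}\}_{k \geq 0}$ is finite. Because the map $u \mapsto u + u^{-1}$ on $\bar K^{\times}$ is at most two-to-one, the set $\{\omega^{m^k}\}_{k \geq 0}$ is also finite. By pigeonhole, there exist $k_1 < k_2$ with either $\omega^{m^{k_1}} = \omega^{m^{k_2}}$ or $\omega^{m^{k_1}} = \omega^{-m^{k_2}}$; in either case $\omega^N = 1$ for a nonzero integer $N$ (namely $m^{k_2} - m^{k_1}$ or $m^{k_2} + m^{k_1}$, each nonzero since $m \geq 2$), so $\omega$ is a root of unity.

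The only real subtlety is the bookkeeping around the $\omega \leftrightarrow \omega^{-1}$ ambiguity of the parametrization, which forces us to consider both sign cases in the pigeonhole step; beyond that, the argument is a direct application of the Chebyshev functional equation and the pigeonhole principle, and no deep tool is required.
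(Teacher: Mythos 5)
Your proof is correct. The easy direction (root of unity implies preperiodic) is the same as the paper's. For the converse, however, you take a genuinely different and more elementary route. The paper argues by contrapositive using a valuation-theoretic fact (essentially Kronecker's theorem): if $a$ is not a root of unity, there is a place $w$ of $K(a)$ with $|a|_w > 1$, and then $|\varphi^k(z)|_w \geq |a|_w^{m^k} - 1 \to \infty$, so the orbit is unbounded at $w$ and hence $z$ is not preperiodic. You instead argue directly from finiteness of the forward orbit: since $u \mapsto u + u^{-1}$ is at most two-to-one, finiteness of $\{\omega^{m^k} + \omega^{-m^k}\}_{k \ge 0}$ forces finiteness of $\{\omega^{m^k}\}_{k \ge 0}$, and the pigeonhole principle then produces $k_1 < k_2$ with $\omega^{m^{k_1}} = \omega^{\pm m^{k_2}}$, so $\omega^{m^{k_2} \mp m^{k_1}} = 1$ with the exponent nonzero because $m \ge 2$. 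What you buy is the avoidance of any appeal to places and absolute values; what the paper's route buys is that it gives quantitative growth of $|\varphi^k(z)|_w$, which is in keeping with the height-theoretic flavor of the rest of the paper but is not needed just to prove this proposition. Both proofs are correct; yours is arguably the cleaner one for this isolated statement.
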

\begin{proof}
  Take an element $z \in \Kb$.  Then there is some $a \in \Kb$ such
  that $z = a + \frac{1}{a}$, as can be seen by finding $a$ such that
  $a^2 - az + 1 = 0$.  Note that $a$ cannot be zero. Now if $a$ is not
  a root of unity, then there is some place $w$ of $K(a)$ such that
  $|a|_w > 1$.  Thus, letting $m = \deg \varphi$ $(\geq 2)$, we have
$$ |\varphi^k(z)|_w = 
\left| a^{m^k} + \frac{1}{a^{m^k}} \right|_w > |a|_w^{m^k} - 1,$$
so $|\varphi^k(z)|_w$ goes to infinity as $k \to \infty$.  Hence $z$
cannot be preperiodic.  

Conversely, if $z = \zeta + \zeta^{-1}$, where $\zeta$ is a root of
unity then there are some positive integers $j \not= k$ such that
$\zeta^{m^k} = \zeta^{m^j}$, which gives
$$ \varphi^k(z) = \zeta^{m^k} + \frac{1}{\zeta^{m^k}} = \varphi^j(z),$$
so $z$ is preperiodic for $\varphi$.  
\end{proof}

\subsection{The canonical height attached to a dynamical system}

Let $\varphi$ be a Chebyshev polynomial of degree $m$ and let $v$ be a
place of a number field $K$.  We define the local canonical height
$\hhat_v(\alpha)$ of a point $\alpha \in \Kbv$ associated to
$\varphi$ at any place $v$ of $K$ as
\begin{equation}\label{local}
\hhat_v(\alpha) = \lim_{k \to \infty} \frac{\log \max
  (|\varphi^k(\alpha)|_v, 1)}{m^k}.
\end{equation}

\noindent  This local canonical height has the property that
$$
\hhat_v(\varphi(\alpha)) = m \hhat_v(\alpha)$$
for any $\alpha \in
\Kbv$ (see \cite{CG} for details).  Note that if $v$ is a
nonarchimedean place, then the Chebyshev dynamical system has good
reduction at $v$ and we have $\hhat_v(\alpha) = \log \max ( |\alpha|_v, 1)$.

When $\alpha \in \Kb$, we have
\begin{equation}\label{global}
\hhat(\alpha) = \sum_{\text{places $v$ of $K$}} \hhat_v(\alpha),
\end{equation}
where the left-hand side is the (global) canonical height of $\alpha$
associated to $\varphi$.

In the case of the places $v \mid \infty$, we will use the
$\varphi$-invariant measure $\mu_v := \mu_{v, \varphi}$ (see \cite{L}) for
$\varphi$ to calculate these local heights.  It is worth noticing this
is not a uniform measure on $[-2, 2]$, unlike in the case of the
dynamical system on ${\mathbb P}^1$ with respect to the map $z \mapsto
z^2$, in which case the measure at archimedean places is the uniform
probability Haar measure on the unit circle centered at the origin
(see \cite{bilu}).  The measure has more mass toward the end/boundary
points $\pm 2$ of the Julia set $[-2, 2]$.  Further, the kernel ${
  \frac{1}{\pi} } { \frac{1}{ \sqrt {4-x^2} } }$ has singularities at
the extreme points $\pm 2$.
  
When $v | \infty$, we have the following formula 
for the local height
at $v$ (see \cite[Appendix B]{PST} or \cite{FR2}) for any $\alpha \in \bC$:
\begin{equation}\label{gen}
\hhat_{v} (\alpha)= \int_{\bC} \log | z
- \alpha |_v \; d \mu (z),
\end{equation} 
where $\mu := \mu_{v, \varphi}$ is the unique $\varphi$-invariant measure
with support on the Julia
set of $\varphi$ at $v$.    

Since any root of unity $\xi_k$, say $e^{2 \pi i/k}$, is preperiodic
for the the map sending $z$ to $z^m$, we see that $\xi_k + \xi_k^{-1} = 2
\cos(2\pi/k)$ is preperiodic for $\varphi$.  Now, the preperiodic points
of $\varphi$ are equidistributed with respect to $\mu$ (see \cite{L, BH}),
so for any continuous function $f$ on $[-2,2]$ we have
$$
\lim_{k \to \infty} \frac{1}{k} \sum_{j=1}^k f(2 \cos(j \pi /k)) = \int_\bC f
\, d \mu.$$
Thus $d \mu$ is the push-forward of the the uniform distribution on
$[0, \pi]$ under the map $\theta \mapsto 2 \cos \theta$, thus
$$ d \mu(x) = \frac{1}{\pi} \frac{d}{dx} \cos^{-1} (x/2) \, dx =
\frac{1}{\pi} \frac{1}{\sqrt{4 - x^2}} \, dx.$$
Thus, \eqref{gen} becomes
\begin{equation}\label{from-PST}
\hhat_{v} (\alpha) = {
  \frac{1}{\pi} } \int_{-2}^{2} { \frac{1}{ \sqrt {4-x^2} } } \log | x
- \alpha |_v \; dx
\end{equation} 
for any $\alpha \in \bC$.

\section{Archimedean places}

\subsection{A counting lemma}

Let $K$ be a number field, and 
let $I \subset [-2, 2]$ be an interval. For any root of unity
$\zeta \in \overline K$, write $x_{\zeta} := \zeta + \zeta^{-1}$. Let
$$
\mathcal N ( x_{\zeta}, I ) \; := \; 
\# \{ \sigma (x_{\zeta}) \in I : \sigma \in 
\Gal \big ( K (x_{\zeta})/K \big ) \}.
$$

\vspace{0.15cm}

\begin{lem}~\label{lem;counting}
Keep notation just above. Let $-2 \leq c < d \leq 2$, and let
$I := (c, d]$ be an interval.
Then for any real $0 < \gamma < 1$ and 
any root of unity $\zeta \in \overline K$,
\begin{equation}\label{first}
\mathcal N ( x_{\zeta}, I )
\; = \;
{ \frac{[ K(x_{\zeta}) : K ]}{\pi} } 
\Big ( \cos^{-1} { \frac{c}{2} } - \cos^{-1} { \frac{d}{2} } \Big )
\; + \; O_{\gamma} \big ([ K(x_{\zeta}) : K ]^{\gamma} \big ) 
\end{equation}
where $\cos^{-1}: [-1, 1] \rightarrow [0, \pi]$ is the $\arccos$
function.  In particular, when $-2 <  c < d < 2$, we may write
\begin{equation}\label{M}
\mathcal N ( x_{\zeta}, I ) \leq  M [ K(x_{\zeta}) : K ](d-c)
+ O_{\gamma} \big ([ K(x_{\zeta}) : K ]^{\gamma} \big )  
\end{equation}
where $M := M_{c,d}$ 
is the supremum of $\frac{1}{\sqrt{4 - x^2}}$ on $(c, d]$.    
\end{lem}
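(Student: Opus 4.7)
The plan is to reduce the statement to a classical discrepancy estimate for the fractional parts $\{k/n\}$ as $k$ runs over a coset of a subgroup of $(\ZZ/n\ZZ)^\times$, where $n$ is the order of $\zeta$. Specifically, writing $\zeta = e^{2\pi i a/n}$ with $\gcd(a,n)=1$, the Galois group $\Gal(K(\zeta)/K)$ embeds into $(\ZZ/n\ZZ)^\times$ as a subgroup $H$, and the Galois conjugates of $\zeta$ over $K$ are $\{\zeta^h : h \in aH\}$. Since $\sigma(x_\zeta) = \sigma(\zeta) + \sigma(\zeta)^{-1} = 2\cos(2\pi h/n)$, the distinct conjugates of $x_\zeta = \zeta + \zeta^{-1}$ correspond to the quotient $aH/\{\pm 1\}$, whose cardinality is $[K(x_\zeta):K]$. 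The condition $2\cos(2\pi h/n) \in (c,d]$ translates to $h/n \bmod 1$ lying in the union of two arcs
\[
J \; = \; \bigl[\tfrac{1}{2\pi}\cos^{-1}(d/2),\,\tfrac{1}{2\pi}\cos^{-1}(c/2)\bigr)\,\cup\,\bigl(1-\tfrac{1}{2\pi}\cos^{-1}(c/2),\,1-\tfrac{1}{2\pi}\cos^{-1}(d/2)\bigr],
\]
of total length $L = \frac{1}{\pi}(\cos^{-1}(c/2) - \cos^{-1}(d/2))$. So $\mathcal{N}(x_\zeta,I)$ equals half the cardinality of $\{h \in aH : h/n \in J\}$ (up to a negligible fixed-point correction from the $\pm 1$ action), and the main term $\frac{|aH|}{2}\cdot L = [K(x_\zeta):K] \cdot L$ is exactly what appears in \eqref{first}.

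Next I would apply the Erd\H{o}s--Tur\'an inequality to bound the discrepancy of $\{h/n : h \in aH\}$ on $\RR/\ZZ$. This reduces matters to estimating the exponential sums $S_m := \sum_{h \in aH} e^{2\pi i m h/n}$ for $1 \le m \le M$. After factoring out the character $e^{2\pi i m a/n}$, the sum $S_m$ becomes a sum over a subgroup of $(\ZZ/n\ZZ)^\times$, which decomposes via Dirichlet characters modulo $n$ into Gauss sums and so is bounded by $O_\epsilon(n^{1/2+\epsilon})$ by standard estimates (the worst case being the Ramanujan sum when $H = (\ZZ/n\ZZ)^\times$). Choosing $M$ to be a suitable power of $n$ yields a discrepancy bound $O(n^{-\delta})$ for some $\delta = \delta(\gamma)>0$, and hence an error of $O(n^{1-\delta})$ in the count.

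It remains to convert the bound $O(n^{1-\delta})$ into the bound $O([K(x_\zeta):K]^\gamma)$ of the statement. For this I would use the elementary estimate $\phi(n) \gg n/\log\log n$, together with $[K(\zeta):K] \ge \phi(n)/[K:\QQ]$ and $[K(x_\zeta):K] \ge \tfrac{1}{2}[K(\zeta):K]$, to conclude that $n \ll_{K,\epsilon} [K(x_\zeta):K]^{1+\epsilon}$ for any $\epsilon>0$. Taking $\epsilon$ small enough so that $(1+\epsilon)(1-\delta) < \gamma$ gives the desired error term. I expect the character-sum estimate step to be the main technical obstacle, though it is classical; the arithmetic conversion $n \leftrightarrow [K(x_\zeta):K]$ is then routine.

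Finally, the second assertion \eqref{M} is a purely analytic consequence of \eqref{first}. One computes
\[
\cos^{-1}(c/2) - \cos^{-1}(d/2) \;=\; \int_{c}^{d} \frac{dx}{\sqrt{4-x^2}} \;\le\; M\,(d-c)
\]
by the mean value theorem (valid since $M = \sup_{(c,d]} (4-x^2)^{-1/2} < \infty$ when $-2 < c < d < 2$), so the main term in \eqref{first} is bounded above by $M[K(x_\zeta):K](d-c)/\pi \le M[K(x_\zeta):K](d-c)$, and absorbing the error from \eqref{first} gives \eqref{M}.
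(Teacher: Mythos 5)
Your derivation of \eqref{M} from \eqref{first} via the mean value theorem is exactly what the paper does, and your translation of the condition $x_\zeta\in(c,d]$ into the condition that $a/N$ lie in an arc of length $\frac{1}{\pi}\big(\cos^{-1}(c/2)-\cos^{-1}(d/2)\big)$ matches the paper as well. However, for the core estimate \eqref{first} the paper simply invokes \cite[Prop.~1.3]{BIR}; you instead try to reprove that counting result from scratch via Erd\H{o}s--Tur\'an and character sums, and this is where there is a genuine gap.

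The problem is quantitative. Erd\H{o}s--Tur\'an together with P\'olya--Vinogradov-type bounds on $\sum_{h\in aH}e^{2\pi i m h/n}$ can only give a discrepancy of order $n^{-1/2+\epsilon}$, hence an error term of order $n^{1/2+\epsilon}\asymp[K(x_\zeta):K]^{1/2+\epsilon}$. That is enough for $\gamma>1/2$ but not for \emph{every} $\gamma\in(0,1)$, which is what the lemma asserts and what the paper actually uses (it applies the lemma with $\gamma=1/4$ in the proof of Proposition~\ref{most}). Your sentence ``taking $\epsilon$ small enough so that $(1+\epsilon)(1-\delta)<\gamma$'' quietly requires $\delta>1-\gamma$, which for small $\gamma$ means a discrepancy much better than square-root cancellation can provide. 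In addition, the asserted pointwise bound $|S_m|\ll_\epsilon n^{1/2+\epsilon}$ is not correct as stated: when $H=(\ZZ/n\ZZ)^\times$ the sum $S_m$ is a Ramanujan sum, which for suitable $m$ with $\gcd(m,n)$ large is of size comparable to $\phi(n)$. The actual argument behind \cite[Prop.~1.3]{BIR} is of a different flavour: one uses that $\Gal(K(\zeta_n)/K)$ is not an arbitrary subgroup of $(\ZZ/n\ZZ)^\times$ but a \emph{congruence} subgroup whose conductor divides a fixed integer depending only on $K$ (since $K\cap\QQ^{\mathrm{ab}}$ lies in some fixed cyclotomic field). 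This reduces the count to counting integers in an interval that lie in a bounded number of fixed residue classes and are coprime to $n$; M\"obius inversion then gives an error $O(\tau(n))=O_\epsilon(n^\epsilon)$, which is what yields $O_\gamma([K(x_\zeta):K]^\gamma)$ for \emph{all} $\gamma\in(0,1)$. So the analytic part of your write-up needs to be replaced by this structural, elementary sieve argument (or by a direct citation, as the paper does).
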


\begin{proof}
Write $\zeta = e^{2\pi i { \frac{a}{N} } }$, where $N$ is a positive
integer and $1 \leq a \leq N$. Then note 
\begin{eqnarray}
x_{\zeta} \in I
\; &\Longleftrightarrow& \;
e^{2\pi i { \frac{a}{N} } } + e^{-2\pi i { \frac{a}{N} } } \in I
\nonumber\\
\; &\Longleftrightarrow& \;
\cos \Big ( 2 \pi { \frac{a}{N} } \Big ) \in 
\Big ( { \frac{c}{2} }, { \frac{d}{2} } \Big ] \nonumber\\
\; &\Longleftrightarrow& \;
a \in { \frac{N}{2 \pi} } 
\Big [ \cos^{-1} { \frac{d}{2} },  \cos^{-1} { \frac{c}{2} } \Big ).
\nonumber
\end{eqnarray}
Then \eqref{first} follows immediately from
\cite[Prop. 1.3]{BIR}.  To see that \eqref{M} holds, note that the
derivative of the function $\cos^{-1} (x/2)$ is $\frac{1}{\sqrt{4 -
    x^2}}$.  Thus, \eqref{M} is a consequence of \eqref{first} and
along with the Mean Value Theorem from calculus.
\end{proof}

Remark. In the above, more precisely, we may define $M$ to be 
the supremum of $\frac{1}{\pi} \frac{1}{\sqrt{4 - x^2}}$ on $(c, d]$.       
However, this difference will not matter for our later purpose. So
we will keep the above choice for $M$. 

\subsection{Baker's lower bounds for linear forms in logarithms}

Here we state the theorem on 
Baker's lower bounds for linear forms in logarithms, 
(see \cite{Baker}[A.~Baker, Thm. 3.1, p. 22]).
\begin{thm}[Baker]
\label{thm;baker}
Suppose that 
$e^{2 \pi i \theta_0} \in \overline {\mathbb Q}$. 
Then there exists a constant $C := C(\theta_0) > 0$ such that
for any coprime $a, N \in \mathbb Z$ ($N \neq 0$ or $\pm 1$)
with ${ \frac{a}{N} } \neq \theta_0$,
$$
\Big | { \frac{a}{N} } - \theta_0 \Big | \; \geq \;
M^{-C}
$$ 
where $M := \max ( |a|, |N| )$. 
\end{thm}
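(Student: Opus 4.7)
The plan is to derive the stated inequality directly from A.~Baker's classical lower bound for a nonzero linear form in two logarithms of algebraic numbers. Let $\alpha := e^{2\pi i \theta_0}$, which is algebraic by hypothesis, and split into two cases according to whether $\theta_0$ is rational or irrational (equivalently, whether $\alpha$ is or is not a root of unity).

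If $\theta_0 = p/q \in \QQ$ in lowest terms, then for coprime $a,N \in \ZZ$ with $|N|\geq 2$ and $a/N \neq p/q$,
$$ \Big| \tfrac{a}{N} - \theta_0 \Big| \;=\; \frac{|aq - Np|}{|N|\,q} \;\geq\; \frac{1}{|N|\,q} \;\geq\; \frac{1}{qM}, $$
which is $\geq M^{-C}$ as soon as $C \geq 1 + \log_2 q$. Since $C$ is allowed to depend on $\theta_0$, this handles the rational case by a purely elementary argument.

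The substantive case is $\theta_0 \notin \QQ$, so that $\alpha$ is not a root of unity. Fix branches of the logarithm with $\log(-1) = \pi i$ and $\log \alpha = 2\pi i \theta_0$, and form the linear form
$$ \Lambda \;:=\; 2a \log(-1) \;-\; N \log \alpha \;=\; 2\pi i (a - N\theta_0), $$
which is nonzero precisely because $a/N \neq \theta_0$. Applying Baker's theorem to the algebraic logarithms $\log(-1)$, $\log \alpha$ with integer coefficients $(2a, -N)$ produces an effective constant $C' = C'(\alpha) > 0$ with
$$ |\Lambda| \;\geq\; H^{-C'}, \qquad H := \max(2|a|, |N|) \leq 2M. $$
Dividing by $2\pi |N|$ and adjusting the exponent to absorb the extra factors of $2$ and $\pi$ yields $|a/N - \theta_0| \geq M^{-C}$ for a suitable $C = C(\theta_0)$.

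I do not anticipate a genuine obstacle: the only care needed is bookkeeping to match the precise hypotheses of the chosen formulation of Baker's theorem (both $-1$ and $\alpha$ are nonzero algebraic numbers, the coefficients $(2a, -N)$ are integers, and $\Lambda \neq 0$), together with the one-line passage from the lower bound on $|\Lambda|$ to the one on $|a/N - \theta_0|$. The qualitative content of the statement is simply that Baker provides a polynomial irrationality measure for $\theta_0$, which is exactly what is needed to feed into the archimedean estimates of Section~3.
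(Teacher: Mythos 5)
Your proof is correct and follows essentially the same route as the paper: rewrite $\frac{a}{N}-\theta_0$ as $\frac{1}{2\pi}$ times a nonzero linear form in logarithms of algebraic numbers and invoke Baker's Theorem~3.1. The paper is slightly more streamlined, applying Baker directly to $\frac{a}{N}\cdot 2\pi i - 2\pi i\theta_0$ with the \emph{rational} coefficients $\frac{a}{N}$ and $-1$ (height $\le M$), which makes your clearing of denominators and the separate elementary treatment of rational $\theta_0$ unnecessary, though both steps are harmless.
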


\begin{proof}
\begin{eqnarray}
\Big | { \frac{a}{N} } - \theta_0 \Big | 
\; & = & \; 
{ \frac{1}{2 \pi} }
\Big | { \frac{a}{N} } \cdot 2 \pi i - 2 \pi i \theta_0 \Big |. \nonumber
\end{eqnarray}
Then apply Baker's theorem to the absolute value of the right hand side
and adjust the resulting constant for ${ \frac{1} {2 \pi} }$. (Also
recall that $N \neq 0$ or $\pm 1$.)  
\end{proof}

\section{The main theorem and its variant}

\subsection{The main theorem and its proof}
We will prove Theorem~\ref{comp} by breaking it into several cases.  We
begin with the case where the place $v$ is finite.  For the sake of
precision, we will state when we need $\alpha$ to be in $K$ and when it
suffices that it be in $\Kbv$.  
\begin{prop}\label{prev}
  Let $(\zeta_n)_{n=1}^{\infty}$ be a sequence of distinct roots of
  unity, and write $x_n := \zeta_n + {\zeta_n}^{-1}$ for any $n \geq
  1$.  If $v$ is finite, then for any $\alpha \in \Kbv$, we have
\begin{equation}\label{finite}
\hhat_v(\alpha) = \lim_{n \rightarrow \infty}
{ \frac{1} { [ K(x_n) : K ] } } \sum_{\sigma:
  K(x_n)/K \hookrightarrow \overline{K}_v } \log | \sigma (x_n) - \alpha
|_v. 
\end{equation}
\end{prop}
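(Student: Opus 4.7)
The plan is to split the argument by the size of $|\alpha|_v$, exploiting the closed form $\hhat_v(\alpha) = \log\max(|\alpha|_v,1)$ for Chebyshev polynomials at finite places, recalled in the excerpt just after \eqref{local}.

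\emph{Case $|\alpha|_v > 1$.} This case is essentially tautological. Good reduction at $v$ forces every $\varphi$-preperiodic point $y$ to satisfy $|y|_v \le 1$: otherwise $|\varphi^k(y)|_v = |y|_v^{m^k}$ would tend to infinity, contradicting preperiodicity. Hence $|\sigma(x_n)|_v \le 1 < |\alpha|_v$ for every $\sigma$, and the strict ultrametric triangle inequality gives the \emph{exact} equality $|\sigma(x_n)-\alpha|_v = |\alpha|_v$. The average in \eqref{finite} therefore equals $\log|\alpha|_v = \hhat_v(\alpha)$ already at each finite stage.

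\emph{Case $|\alpha|_v \le 1$.} Here $\hhat_v(\alpha) = 0$. The upper bound follows from $|\sigma(x_n)-\alpha|_v \le 1$, which forces each summand to be $\le 0$. For the matching lower bound, the plan is to reduce to the multiplicative-group setting treated in~\cite{BIR}. Solve $\beta^2 - \alpha\beta + 1 = 0$ to write $\alpha = \beta+\beta^{-1}$ with $\beta \in \overline K_v$; the ultrametric combined with $|\alpha|_v \le 1$ and $|\beta|_v|\beta^{-1}|_v = 1$ forces $|\beta|_v = 1$. Using the algebraic identity
$$
(a+a^{-1}) - (\beta+\beta^{-1}) \;=\; \frac{(a-\beta)(a-\beta^{-1})}{a}
$$
with $a = \tau(\zeta_n)$ (so $|a|_v = 1$) rewrites $|\sigma(x_n)-\alpha|_v$ as $|\tau(\zeta_n)-\beta|_v \cdot |\tau(\zeta_n)-\beta^{-1}|_v$ for any lift $\tau : K(\zeta_n) \hookrightarrow \overline K_v$ of $\sigma$. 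Averaging over $\sigma : K(x_n) \hookrightarrow \overline K_v$, and using $[K(\zeta_n):K(x_n)] \in \{1,2\}$ together with the fact that the two possible lifts swap $\zeta_n$ with $\zeta_n^{-1}$ and so produce the same product $|\tau(\zeta_n)-\beta|_v|\tau(\zeta_n)-\beta^{-1}|_v$, converts the average in \eqref{finite} into
$$
\frac{1}{[K(\zeta_n):K]}\sum_\tau \log|\tau(\zeta_n)-\beta|_v \;+\; \frac{1}{[K(\zeta_n):K]}\sum_\tau \log|\tau(\zeta_n)-\beta^{-1}|_v.
$$
Each of these two averages tends to $0$ by the nonarchimedean equidistribution of roots of unity against $\log|\cdot-\gamma|_v$ (for any $\gamma \in \overline K_v$ with $|\gamma|_v = 1$) established in the $\Gm$-proof in~\cite{BIR}; this provides the lower bound $\ge 0 = \hhat_v(\alpha)$.

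\emph{Main obstacle.} The only nontrivial step is the lower bound in the case $|\alpha|_v \le 1$, and its content is the $\Gm$-equidistribution already present in~\cite{BIR}; the Chebyshev-specific trick is the substitution $\alpha = \beta + \beta^{-1}$. A minor technical wrinkle arises when $\beta$ itself happens to be a root of unity (equivalently, $\alpha$ is $\varphi$-preperiodic): some $\tau(\zeta_n)$ may then coincide with $\beta$ or $\beta^{-1}$, giving a spurious $-\infty$ term. Since the sequence $(x_n)$ is nonrepeating this occurs for only finitely many indices, and those indices can be excised from the sequence without affecting the limit.
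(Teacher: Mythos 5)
Your argument is correct, but it follows a genuinely different route from the paper's.  For the case $|\alpha|_v \le 1$ the paper argues directly: if $|x_m - \alpha|_v \le r$ and $|x_n - \alpha|_v \le r$ with $r<1$, then $|x_m - x_n|_v \le r$, and the factorization
\[
x_m - x_n \;=\; (\zeta_m - \zeta_n)\bigl(1 - (\zeta_m\zeta_n)^{-1}\bigr)
\;=\; \bigl(1 - \zeta_m^{-1}\zeta_n\bigr)\bigl(1 - (\zeta_m\zeta_n)^{-1}\bigr)
\]
forces one of $\zeta_m^{-1}\zeta_n$, $(\zeta_m\zeta_n)^{-1}$ to be a root of unity of $p$-power order (for $p$ the residue characteristic) with $|1-\xi|_v \le \sqrt r$, of which there are only finitely many; hence only finitely many roots of unity $\zeta$ have $|\zeta+\zeta^{-1}-\alpha|_v \le r$, and the desired limit follows.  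Your proposal instead makes the analogy to $\Gm$ explicit: writing $\alpha = \beta + \beta^{-1}$ with $|\beta|_v = 1$, using the identity $(a+a^{-1})-(\beta+\beta^{-1}) = (a-\beta)(a-\beta^{-1})/a$, and noting that the two lifts of each $\sigma$ to $K(\zeta_n)$ yield the same product, you rewrite the Chebyshev average exactly as the sum of two $\Gm$-averages at $\beta$ and $\beta^{-1}$ and then invoke the nonarchimedean $\Gm$ case from \cite{BIR}.  The lifting bookkeeping and the treatment of the degenerate case ($\beta$ a root of unity, handled by excision) both check out.  The paper's inline argument is self-contained and avoids discussing where $\beta$ lives or tracking the lifts, while your modular reduction has the virtue of isolating that the nonarchimedean content here is precisely the $\Gm$ statement and nothing more — indeed the same $\zeta \mapsto \zeta + \zeta^{-1}$ reduction already drives the paper's Proposition~\ref{prop;preper}.
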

\begin{proof}
  If $| \alpha |_v > 1$, then $| \sigma (x_n) - \alpha |_v = |\alpha
  |_v$. Thus, \eqref{finite} is immediate. Now, suppose that $| \alpha |_v \leq 1$. 
  Let $r < 1$ be a real
  number. Let $x_m$ and $x_n$ satisfy that $| x_m - \alpha |_v \leq r$
  and $| x_n - \alpha |_v \leq r$. Then observe
\begin{eqnarray}
r 
& \; \geq \; & | (x_m - \alpha) - (x_n - \alpha) |_v \nonumber\\
& \; = \; & | x_m - x_n |_v  \nonumber\\
& \; = \; & 
\Big | (\zeta_m - \zeta_n) - { \frac{\zeta_m - \zeta_n}{\zeta_m \zeta_n}} 
\Big |_v
\nonumber\\
& \; = \; & | \zeta_m - \zeta_n |_v \; | 1 - (\zeta_m \zeta_n)^{-1} |_v
\nonumber\\
& \; = \; & | 1 - \zeta_m^{-1} \zeta_n |_v \; 
| 1 - (\zeta_m \zeta_n)^{-1} |_v. \nonumber
\end{eqnarray}
Hence either $| 1 - \zeta_m^{-1} \zeta_n |_v \leq \sqrt r$ or
$| 1 - (\zeta_m \zeta_n)^{-1} |_v \leq \sqrt r$.  
In the first (resp.~second) case it follows that $\zeta_m^{-1} \zeta_n$ 
(resp.~$(\zeta_m \zeta_n)^{-1}$)
must have order equal to a power of the prime number 
$\in {\mathbb Z}$ lying below $v$, and  
that there are only finitely many choices for 
$\zeta_m^{-1} \zeta_n$ (resp. $(\zeta_m \zeta_n)^{-1}$) in the first
(resp.~second) case. Thus, for any real $r < 1$, 
there are only finitely many indices $n \geq 1$
such that $| x_n - \alpha |_v \leq r$, which immediately implies the
desired convergence in this case.
\end{proof}

We now treat the archimedean $v$ for which $\alpha$ is outside the Julia
set at $v$.
\begin{prop}
  Let $x_n$ be as in Proposition~\ref{prev}.  If $v$ is archimedean, then
  for any $\alpha \in \bC - [-2,2]$, we have
$$\hhat_v(\alpha) = \lim_{n \rightarrow \infty}
{ \frac{1} { [ K(x_n) : K ] } } \sum_{\sigma:
  K(x_n)/K \hookrightarrow \overline{K}_v } \log | \sigma (x_n) - \alpha
|_v. $$
\end{prop}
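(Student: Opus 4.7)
The plan is to identify the limit on the right with an integral against the invariant measure, via weak convergence of Galois-orbit measures. Since $v$ is archimedean we may identify $\bKv = \bC$, and under this identification every $\sigma(x_n) = \sigma(\zeta_n) + \sigma(\zeta_n)^{-1}$ is again of the shape $\xi + \xi^{-1}$ for a root of unity $\xi \in \bC$, hence lies in the Julia set $[-2,2]$. Consequently the probability measures
$$
\mu_n \; := \; \frac{1}{[K(x_n):K]} \sum_{\sigma : K(x_n)/K \hookrightarrow \bC} \delta_{\sigma(x_n)}
$$
and the $\varphi$-invariant measure $\mu_v$ are all supported on the compact set $[-2,2]$.

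The first step is to invoke the equidistribution theorem for Galois orbits of preperiodic points of a Chebyshev polynomial, as in \cite{L, BH} (compare also \cite{bilu}). Since the $\zeta_n$ are distinct and each value of $x$ arises from at most two of the $\zeta_n$ (namely $\zeta$ and $\zeta^{-1}$), the $x_n$ take infinitely many distinct values, so after passing to an infinite subsequence on which the $x_n$ are pairwise distinct the theorem yields $\mu_n \to \mu_v$ weakly on $\bC$; the dropped indices, occurring at most twice per value, do not affect the limit.

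The second step is the observation that under the hypothesis $\alpha \in \bC - [-2,2]$ the distance from $\alpha$ to the compact set $[-2,2]$ is strictly positive, so the function $f(z) := \log |z - \alpha|_v$ is real-analytic, and in particular continuous and bounded, on an open neighborhood of $[-2,2]$. Weak convergence of $\mu_n \to \mu_v$ therefore gives
$$
\int_{\bC} f \, d\mu_n \; \lra \; \int_{\bC} f \, d\mu_v .
$$
The left-hand side equals $\frac{1}{[K(x_n):K]} \sum_{\sigma} \log |\sigma(x_n) - \alpha|_v$ by the very definition of $\mu_n$, and the right-hand side equals $\hhat_v(\alpha)$ by the integral formula \eqref{gen}. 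This yields the proposition.

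Because $\alpha$ sits uniformly away from the Julia set, no essential obstacle arises here: $\log |z - \alpha|_v$ is a harmless continuous test function against which equidistribution may be applied directly. The genuine difficulty lies in the complementary archimedean case $\alpha \in [-2,2]$, where $\log|z-\alpha|_v$ develops a logarithmic singularity inside the support of $\mu_v$ and weak convergence against a continuous function no longer suffices; that case will require the counting lemma of Section~3.1 together with A.~Baker's theorem on linear forms in logarithms in order to control the smallest values of $|\sigma(x_n) - \alpha|_v$.
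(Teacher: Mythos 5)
Your proposal is correct and follows essentially the same route as the paper: identify $\hhat_v(\alpha)$ with $\int \log|z-\alpha|\,d\mu_v$ via \eqref{gen}, observe that $\alpha$ is at positive distance from the support $[-2,2]$ so that $\log|z-\alpha|$ is a continuous bounded test function there, and then apply equidistribution of the Galois orbits of the preperiodic points. The only cosmetic difference is that the paper, in order to invoke \cite[Theorem 1.1]{bilu} verbatim, explicitly constructs a continuous bounded function $g$ on all of $\bC$ agreeing with $\log|z-\alpha|$ on $[-2,2]$, whereas you pass to the restriction to the common compact support directly (and you also spell out the harmless issue of possible coincidences $x_m = x_n$ coming from $\zeta$ versus $\zeta^{-1}$); both versions are sound.
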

\begin{proof}
  From \eqref{gen}, we have
  $$
  \int_{\bC} \log |z - \alpha|_v \, d \mu_v (z)= \hhat_v(\alpha),$$
  where $\mu_v := \mu_{v, \varphi}$ is the invariant measure for 
$\varphi$ at $v$.  This
  measure is supported on $[-2,2]$, so if $g$ is a function on $\bC$
  that agrees with $\log |z - \alpha|$ on $[-2,2]$ we have
\begin{equation}\label{agree}
\int_{\bC} g(z) d \mu_v(z)= \hhat_v(\alpha).
\end{equation}
Let $\epsilon = \min_{w \in [-2,2]} |w - \alpha|$ (note that $\epsilon
\not= 0$ since $\alpha \notin [-2,2]$) and define $g(z)$ as 
$$ g(z) = \min \Big(\log \max ( | z - \alpha |, \epsilon ), \log
(|\alpha| + 2) \Big).$$
Then $g$ is continuous and bounded on all of $\bC$ and agrees with
$\log |z - \alpha|$ on $[-2,2]$.  By \cite[Theorem 1.1]{bilu}, we have
$$ \int_{\bC} g(z) \, d \mu_v(z) = \lim_{n \rightarrow \infty}
{ \frac{1} { [ K(x_n) : K ] } } \sum_{\sigma:
  K(x_n)/K \hookrightarrow \overline{K}_v } g(\sigma(x_n)).$$
Since all $x_n \in [-2,2]$, this finishes the proof, using \eqref{agree}.
\end{proof}

Now, we come to the most difficult case.  
\begin{prop}~\label{most}
Let $x_n$ be as in 
Proposition~\ref{prev}.
If $v | \infty$ and $\alpha \in [-2,2]$ is not preperiodic, then we have
$$\hhat_v(\alpha) = \lim_{n \rightarrow \infty} \frac{1}{ [ K(x_n) : K
  ] } \sum_{\sigma: K(x_n)/K \hookrightarrow \overline{K}_v } \log |
\sigma (x_n) - \alpha |_v. $$
\end{prop}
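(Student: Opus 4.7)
The plan is to approximate the singular integrand $\log|z-\alpha|_v$ by continuous truncations, apply Bilu's equidistribution theorem for continuous functions (exactly as in the preceding proposition), and then control the truncation error by combining Lemma~\ref{lem;counting} with Baker's theorem (Theorem~\ref{thm;baker}). Write $\alpha=2\cos(2\pi\theta_0)$ with $\theta_0\in(0,\tfrac12)$; by Proposition~\ref{prop;preper}, non-preperiodicity of $\alpha$ forces $\theta_0\notin\QQ$, and since $\alpha\in K$ the number $e^{2\pi i\theta_0}$ is algebraic (a root of $z^2-\alpha z+1$), so Baker's theorem applies. Note that $\alpha\notin\{-2,0,2\}$ (all three are preperiodic), so $\theta_0$ is bounded away from $0,\tfrac14,\tfrac12$. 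For each $\delta>0$ the function
$$g_\delta(z):=\max\bigl(\log|z-\alpha|_v,\ \log\delta\bigr)$$
is continuous and bounded on $\CC$; writing $d_n:=[K(x_n):K]$, the argument of the previous proposition (applying \cite[Theorem 1.1]{bilu}) gives
$$\lim_{n\to\infty}\frac{1}{d_n}\sum_{\sigma:K(x_n)/K\hookrightarrow\bKv}g_\delta(\sigma(x_n))\ =\ \int_{\CC}g_\delta\,d\mu_v.$$
Since $\mu_v$ has density $\tfrac{1}{\pi\sqrt{4-x^2}}$, bounded on a neighborhood of $\alpha$, we have $\log|z-\alpha|_v\in L^1(\mu_v)$, and dominated convergence combined with \eqref{from-PST} yields $\int g_\delta\,d\mu_v\to\hhat_v(\alpha)$ as $\delta\downarrow 0$.

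It then suffices to show that the non-negative error
$$E_n(\delta)\ :=\ \frac{1}{d_n}\sum_{|\sigma(x_n)-\alpha|_v<\delta}\bigl(\log\delta-\log|\sigma(x_n)-\alpha|_v\bigr)$$
satisfies $\limsup_n E_n(\delta)\to 0$ as $\delta\to 0$. Write $\sigma(x_n)=2\cos(2\pi a_\sigma/N_n)$ with $\gcd(a_\sigma,N_n)=1$. The identity $2\cos X-2\cos Y=-4\sin\tfrac{X+Y}{2}\sin\tfrac{X-Y}{2}$, together with $2\pi\theta_0$ being bounded away from $0,\pi$, yields, for $\delta$ sufficiently small,
$$|\sigma(x_n)-\alpha|_v\ \geq\ c_0\cdot\min\bigl(\bigl|\tfrac{a_\sigma}{N_n}-\theta_0\bigr|,\ \bigl|\tfrac{a_\sigma}{N_n}-(1-\theta_0)\bigr|\bigr).$$
Applying Theorem~\ref{thm;baker} to $\theta_0$ and to $1-\theta_0$ gives a lower bound $|\sigma(x_n)-\alpha|_v\geq c_1 N_n^{-C}$ with $C=C(\theta_0)$. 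Combining $\phi(N_n)\geq cN_n/\log\log N_n$ with $\phi(N_n)\leq 2[K:\QQ]\,d_n$ converts this into $|\sigma(x_n)-\alpha|_v\geq r_n$ with $\log(1/r_n)=O(\log d_n)$.

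A layer-cake decomposition now finishes the proof. Applying Lemma~\ref{lem;counting} to the interval $(\alpha-t,\alpha+t]$ (for $t$ so small that it lies in a fixed neighborhood of $\alpha$ inside $(-2,2)$, on which the supremum of $\tfrac{1}{\sqrt{4-x^2}}$ equals some $M$) gives
$$\#\{\sigma:|\sigma(x_n)-\alpha|_v<t\}\ \leq\ 2Mtd_n+O_\gamma(d_n^\gamma),$$
and hence
$$E_n(\delta)\ =\ \frac{1}{d_n}\int_0^{\log(\delta/r_n)}\#\{\sigma:|\sigma(x_n)-\alpha|_v<\delta e^{-s}\}\,ds\ \leq\ 2M\delta+O_\gamma(d_n^{\gamma-1}\log d_n).$$
Since $\gamma<1$ and $\log(\delta/r_n)=O(\log d_n)$, we conclude $\limsup_n E_n(\delta)\leq 2M\delta$, which tends to $0$ as $\delta\to 0$. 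The essential obstacle overcome here is that $\alpha$ lies in the Julia set, so the conjugates $\sigma(x_n)$ accumulate to $\alpha$; without Baker's polynomial-in-$d_n$ lower bound for $|\sigma(x_n)-\alpha|_v$, the layer-cake range would be uncontrolled and one could not extract a bound on $E_n(\delta)$ that vanishes uniformly in $n$ as $\delta\to 0$.
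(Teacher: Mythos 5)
Your proof is correct and invokes exactly the same ingredients as the paper's argument---truncation of $\log|z-\alpha|$ at level $\log\delta$, Bilu-type equidistribution for the truncated function, Lemma~\ref{lem;counting} to count conjugates of $x_n$ in short intervals, and Theorem~\ref{thm;baker} (Baker) to obtain a lower bound $|\sigma(x_n)-\alpha|_v\ge r_n$ with $\log(1/r_n)=O(\log d_n)$. The organization of the error estimate, however, is genuinely different. The paper chops $[\alpha-\delta,\alpha+\delta]$ into $D$ equal subintervals $S_1,\dots,S_D$, bounds the contribution from each $S_i$ $(i\le D-1)$ by comparing with the integral of $\log|x-\alpha|$ over the adjacent subinterval $S_{i+1}$, handles the central piece $[\alpha-\delta/D,\alpha+\delta/D]$ separately via Baker plus the counting lemma, and then balances the resulting terms by the specific choice $D=\lfloor N^{1/4}\rfloor$. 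You instead perform a layer-cake (distributional) integral
$E_n(\delta)=\tfrac{1}{d_n}\int_0^{\log(\delta/r_n)}\#\{\sigma:|\sigma(x_n)-\alpha|_v<\delta e^{-s}\}\,ds$,
feed the counting lemma directly into the integrand, and let the Baker bound truncate the range of integration. This avoids the discretization parameter $D$ altogether and yields the clean bound $\limsup_n E_n(\delta)\le 2M\delta$ with no balancing act. Both arguments rely on the same uniformity of the $O_\gamma(d_n^\gamma)$ error term in Lemma~\ref{lem;counting} over subintervals of a fixed neighborhood of $\alpha$ in $(-2,2)$; you use it continuously in $t$ while the paper uses it for the $D$ discrete subintervals, so neither proof requires anything the other does not. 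One small point you handle more carefully than the paper: you note explicitly that conjugates $2\cos(2\pi a_\sigma/N_n)$ can approach $\alpha=2\cos(2\pi\theta_0)$ with $a_\sigma/N_n$ near either $\theta_0$ or $1-\theta_0$, and apply Baker to both targets; the paper leaves this symmetry implicit in its choice of parametrization.
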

\begin{proof}
  We may assume that $\alpha \in K$.  
  If $v$ is archimedean and $\alpha \in K$ is
  in $[-2,2]$, then we have $\hhat_v(\alpha) = 0$.  This follows from
  the fact that $\varphi$ maps $[-2,2]$ to itself, so if $\alpha \in
  [-2,2]$, then $|\varphi^n(\alpha)|_v$ is bounded for all $n$, so
  $\hhat_v(\alpha) = 0$ by \eqref{local}.
  
  Note $|x|_v = |\tau (x)|$ for all $x \in \bC$, where $\tau: K(x)/K
  \hookrightarrow \bC$ is associated to $v$ and $| \cdot |$ is the
  usual absolute value on $\bC$.  To simplify our notation, we will
  fix one $v | \infty$, suppress $v$ in the notation of the absolute
  value, and use $| \cdot |$ according to this
  observation, i.e., without loss of generality we will prove this
  theorem for the place $v$ equal to the usual absolute value ($|z|
  = \sqrt{z \overline z}$, $z \in \bC$). However, we will keep $v$ in
  the notation of the local height $\hhat_v$ to avoid any confusion with
  the global height $\hhat$.
 
We may write
$
\alpha = e^{2 \pi i \theta_0} + e^{-2 \pi i \theta_0} 
= 2 \cos (2 \pi \theta_0),
$ 
where $\theta_0 \in (- { \frac{1} {2} }, { \frac{1} {2} }]$.  Note
$\alpha$ cannot be equal to $-2$, $2$, or 0 since we assume that $\alpha$
is not preperiodic.  
Note that 
$\int_{0}^{\epsilon} \log \big ( { \frac{t} {\epsilon} } \big ) dt =
-\epsilon$ for any $\epsilon > 0$.

Write 
\begin{eqnarray}
x & \; = \; & 
e^{2\pi i {\theta }} + e^{-2\pi i {\theta }} \;\; \;\; = \;
2 \cos (2\pi {\theta}); \;\;\;\;\;\; 
\textup{and} \nonumber\\
x_n & \; = \; & 
e^{2\pi i { \frac{a}{N} } } + e^{-2\pi i { \frac{a}{N} } } \; = \; 
2 \cos \Big (2\pi { \frac{a}{N} } \Big ) \nonumber
\end{eqnarray}
where $a$ and $N (\neq 0)$  are integers (depending on $n \geq 1$), and 
$\big | { \frac{a}{N} } \big | \leq 1$. 

We recall that $\hhat_v(\alpha) = 0$ since for any $\alpha$ in $[-2,2]$,
the quantity $|P_m^k(\alpha)|$ is bounded for all $k \geq 1$.  Thus, we have

\begin{equation}\label{delta}
\frac{1}{\pi} \int_{-2}^{2} { \frac{1}{ \sqrt
    {4-x^2} } } \log | x - \alpha | \ dx = 0.
\end{equation}
Hence it will suffice to show, for all $n \gg 1$, that the quantity
\begin{equation} \label{quant}
 \frac{1}{ [ K(x_n) : K ] } \Bigg |
    \sum_{\sigma: K(x_n)/K \hookrightarrow \bC} \log
      |\sigma(x_n) - \alpha| \Bigg |
\end{equation}
can be made sufficiently small.  

Fix $\epsilon > 0$.
By \eqref{delta}, we have
\begin{equation}\label{epsi}
\Bigg | \int_{\alpha - \delta}^{\alpha + \delta} { \frac{1}{ \sqrt
    {4-x^2} } } \log | x - \alpha | \ dx \Bigg | < \epsilon,
\end{equation}
i.e., sufficiently small for all sufficiently small $\delta > 0$. 

Let $g_\delta (z) = \log \max (|z - \alpha|, \delta)$.  By
\eqref{epsi} and the fact that $0 > g_\delta(x) > \log |x - \alpha|$
for $x \in [\alpha - \delta, \alpha + \delta]$, we see that
\begin{equation}\label{epsi2}
\Bigg | \int_{-2}^{2} { \frac{1}{ \sqrt
    {4-x^2} } } g_\delta(x) \ dx \Bigg | < \epsilon.
\end{equation}

By the equidistribution theorem of Baker/Rumely
(\cite{BREQUI}), Chambert-Loir (\cite{CL}) and Favre/
Rivera-Letelier
(\cite{FR2}), we see that for all sufficiently large $n$, the quantity
  \begin{equation}\label{ep3}
  \left| \Big( \frac{1}{\pi} \int_{-2}^{2} \frac{1}{ \sqrt {4-x^2}}
    g_\delta(x) \; dx \Big) - \Big( \frac{1}{ [ K(x_n) : K ] }
    \sum_{\sigma: K(x_n)/K \hookrightarrow \bC }
    g_\delta(\sigma(x_n)) \Big) \right| 
\end{equation}
is sufficiently small.  Thus, by \eqref{epsi2} it suffices to show
that
\begin{equation*}
\frac{1}{ [ K(x_n) : K ] } 
\Bigg |
\sum_{\sigma: K(x_n)/K
  \hookrightarrow \bC } \big ( \log | \sigma(x_n) - \alpha | -
g_\delta(\sigma(x_n)) \big ) \Bigg | 
\end{equation*}
is sufficiently small for all $n \gg 1$ and all sufficiently small
$\delta > 0$.  Since $\log | \sigma(x_n) - \alpha | =
g_\delta(\sigma(x_n))$ outside of $[\alpha-\delta, \alpha+\delta]$, it
in turn it suffices to show that
\begin{equation}\label{a}
\frac{1}{ [ K(x_n) : K ] } 
\Bigg |
\sum_{\substack{\sigma: K(x_n)/K
    \hookrightarrow \bC\\ \sigma(x_n) \in [\alpha - \delta, \alpha +
    \delta] }} 
\big ( \log | \sigma (x_n) - \alpha | - g_\delta(\sigma(x_n)) \big ) 
\Bigg |
\end{equation}
is sufficiently small for all $n \gg 1$ and all sufficiently
small $\delta > 0$.  

Now, when $\delta > 0$ is small and $x$ is in $[\alpha - \delta,
\alpha + \delta]$, we have $0 > g_\delta(x) \geq \log |x - \alpha|$
and the quantity \eqref{a} is bounded above by
\begin{equation}\label{b}
\frac{1}{ [ K(x_n) : K ] } 
\Bigg | 
\sum_{\substack{\sigma: K(x_n)/K
    \hookrightarrow \bC\\ \sigma(x_n) \in [\alpha - \delta, \alpha +
    \delta] }} 
\log | \sigma (x_n) - \alpha | 
\Bigg |.
\end{equation}
Hence, finally it suffices to show that \eqref{b} is sufficiently small
whenever $n$ is sufficiently large and $\delta > 0$ is
sufficiently small.  

If we choose $\delta >0$ sufficiently small, we may assume that
\begin{equation}\label{M2}
\min_{x \in [\alpha - \delta, \alpha + \delta]} \left( \frac{1}{\sqrt{4 -
    x^2}} \right) \geq \frac{1}{2} \max_{x \in [\alpha - \delta, \alpha
  + \delta]} \left( \frac{1}{\sqrt{4 - x^2}} \right). 
\end{equation}
We define $M$ as
\begin{equation}\label{M3}
M := \max_{x \in [\alpha - \delta, \alpha
  + \delta]} \left( \frac{1}{\sqrt{4 - x^2}} \right)
\end{equation}

Choose a large positive integer $D$.  For any $1 \leq i \leq D$
denote by $S_i$ the interval
$$[\alpha -
\delta + (i-1)(\delta/D), \alpha - \delta + i (\delta/D)].$$
Given any $n \gg 1$,
let $N_i := N_i (n)$ denote the number of $\sigma(x_n)$'s belonging to
$S_i$.

Note that $\log | \sigma (x_n) - \alpha | \leq 0$,
              whenever $\sigma (x_n)$ belongs to any of the
              $S_i$ $(1 \leq i \leq D)$.
For any $1 \leq i \leq D -1$, on $S_i$ we have
\begin{equation*}
\begin{split}
& \frac{1}{[K(x_n):K]} \biggl | \sum_{\substack{\sigma: K(x_n)/K 
\hookrightarrow \bC \\ \sigma(x_n)
    \in S_i}} \log | \sigma(x_n) - \alpha| \biggl | \\
& \leq  M
(\delta/D) \Big | \log |(D-i)
(\delta/D)| \Big | + O\left(\frac{1}{\sqrt{[K(x_n):K]}} \right) \Big | \log
  \big ( (D-i)\delta/D \big ) \Big | \\
& \quad \quad \text{(by Lemma~\ref{lem;counting} with $\gamma = 1/2$)}\\
& \leq 2 \Bigg | \int_{S_{i+1}} (M/2) \log |x - \alpha| \, dx \Bigg |
+ 
O\left(\frac{1}{\sqrt{[K(x_n):K]}} 
\right) \Big | \log \big ((D-i) \delta/D \big ) \Big |\\
& \leq 2 \Bigg | 
\int_{S_{i+1}} \frac{1}{\sqrt{4 - x^2}} \log |x - \alpha| \, dx \Bigg |
+ O\left(\frac{1}{\sqrt{[K(x_n):K]}}\right) \Big |\log \big ( (D-i) \delta/D
\big ) \Big | \\
& \ \ 
\quad \text{ (by \eqref{M2} and \eqref{M3}) }.  
\end{split}
\end{equation*}

Summing up over all $1 \leq i \leq D-1$ and applying \eqref{epsi} we
obtain
\begin{equation}\label{smaller}
\begin{split}
\frac{1}{[K(x_n):K]} \Bigg | \sum_{\substack{\sigma: K(x_n)/K 
\hookrightarrow \bC \\ \sigma(x_n)
    \in [\alpha - \delta, \alpha - (\delta/D)]}} & \log |
\sigma(x_n) - \alpha| \Bigg |\\
 & \leq  2 \epsilon +  \frac{1}{\sqrt{[K(x_n):K]}}
C_2 D \big ( \big | \log (\delta/D) \bigl | 
 + \log D \big ),
\end{split}
\end{equation}
for some constant $C_2 > 0$ independent of $n$ and $D$.

Similarly, we see that
\begin{equation}\label{bigger}
\begin{split}
\frac{1}{[K(x_n):K]} \Bigg | 
\sum_{\substack{\sigma: K(x_n)/K \hookrightarrow \bC \\ \sigma(x_n)
    \in [\alpha + (\delta/D), \alpha + \delta]}} & \log |
\sigma(x_n) - \alpha| \Bigg | \\
& \leq  2 \epsilon + \frac{1}{\sqrt{[K(x_n):K]}}
C_3  D \big ( \big| \log (\delta/D) \big| + \log D \big ),
\end{split}
\end{equation}
for some constant $C_3 > 0$ independent of $n$ and $D$.  Since
$|\log(1/D)|$ and $\log D$ grow more slowly than any power of $D$, 
we see that
quantities \eqref{smaller} and \eqref{bigger} can be made sufficiently
small when $D$ is large and $[K(x_n):K] \geq D^4$.    

Now, for all sufficiently small $\delta > 0$, we have
$$
0 \geq \log  | x_n - \alpha |  = \log
\big | 2 \cos \Big (2\pi { \frac{a}{N} } \Big ) - 
2 \cos (2\pi {\theta_0}) \big | \geq  \log
\Big | { \frac{a}{N} } - 
\theta_0 \Big | + O(1)
$$
for all $x_n \in [\alpha - \delta, \alpha + \delta]$.  When
$N$ is sufficiently large, Theorem~\ref{thm;baker} thus yields
$$
0 \geq \log | x_n - \alpha | \geq -C_4 {\log N} + O(1) 
$$
where $C_4 > 0$ is a constant independent of $n$.  This inequality
is true not only for $x_n$ itself, but also for all its $K$-Galois
conjugates that belong to $[\alpha - \delta, \alpha + \delta]$, i.e.,
after readjusting $C_4$ if necessary, we have
             \begin{equation}\label{baker-log}
             0 \geq \log | \sigma (x_n) - \alpha | \geq
             - C_4 \log N 
             \end{equation}
             for all $\sigma (x_n) \in [\alpha - \delta, \alpha +
             \delta]$, where $C_4 > 0$ is a constant independent of
             (all) $n \gg 1$.  Thus, it follows from \eqref{M} 
             (again with $\gamma
             = 1/4$) that we have
\begin{equation}\label{close}
\begin{split}
\frac{1}{[K(x_n):K]}  
\Bigg | 
\sum_{\substack{\sigma: K(x_n)/K \hookrightarrow \bC \\
\sigma(x_n) \in [\alpha - (\delta/D), \alpha + (\delta/D)]}} \log
      |\sigma(x_n) - \alpha | 
\Bigg | \\
\leq C_4 M (\delta/D) \log N +
    \frac{C_5 
\log N}{[K(x_n):K]^{1/2} }
\end{split}
\end{equation}
    where $C_5 > 0$ is a constant.

     Write $\phi$ for the Euler function, and suppose that $N
    \gg 1$. Note that
$$
[K(x_n) : K] \; \geq \; 
\frac{[\mathbb Q (x_n) : {\mathbb Q}] }
{[K : \mathbb Q]} \; = \; { \frac{\phi (N)} {[K : \mathbb Q]} }
$$
and $\phi (N) \geq \sqrt N$ (see \cite[page 267, Thm 327]{HW}), and hence that
                 $[K(x_n):K]^{ \frac{1}{2} } \gg \sqrt[4]{N}$.  
Now, let $D =
\lfloor \sqrt[4]{N} \rfloor $.  
(Note this choice of $D$ is compatible with that of $D$ in
\eqref{smaller} and \eqref{bigger}.)
Then, when $N$ is sufficiently large, the right-hand
sides of \eqref{smaller} and \eqref{bigger} are both sufficiently small 
and the right-hand side of \eqref{close} is also sufficiently
small.  Combining equations \eqref{smaller}, \eqref{bigger}, and
\eqref{close} we then obtain that
$$
\frac{1}{[K(x_n):K]} 
\Bigg | 
\sum_{\sigma: K(x_n)/K \hookrightarrow
    \bC } \log |\sigma(x_n) - \alpha | 
\Bigg | \;\; 
\textup{is sufficiently small}.$$
Thus, we must have
$ \lim_{n \to \infty} \sum_{\sigma: K(x_n)/K \hookrightarrow
    \bC } \log |\sigma(x_n) - \alpha | = 0,$
as desired.
\end{proof}

The proof of Theorem~\ref{comp} is now immediate since the
Propositions above cover all $v$ and all non-preperiodic 
$\alpha \in K$.  Now, we are
ready to prove our main theorem, Theorem~\ref{main}.  
\begin{proof}[Proof of Theorem~\ref{main}]
  Let $S$ be a finite set of places of $K$ that includes all the archimedean
  places.  After extending $S$ to a larger finite set if
                 necessary, which only makes the set ${\mathbb
    A^1}_{\varphi, \alpha, S}$ larger, we may assume that $S$ also
  contains all the places $v$ for which $|\alpha|_v > 1$.  Then for any $v
  \notin S$ and any preperiodic point $x_n$ we have 
\begin{equation}\label{zero}
 \log | \sigma (x_n) - \alpha |_v = 0 \; \; \text{for any embedding
   $\sigma: K(x_n)/K \lra \bKv$.}
\end{equation}

Assume that $(x_n)_{n=1}^\infty$ is an infinite nonrepeating
  sequence in ${\mathbb A^1}_{\varphi, \alpha, S}$.  Since we can
  interchange a limit with a finite sum, we have
\begin{equation}
\begin{split}
  & \frac{1}{[ K : {\mathbb Q}]} \hhat(\alpha) 
  = \sum_{v \in S} \lim_{n \rightarrow \infty}
  \frac{1}{[ K(x_n) : {\mathbb Q}]} \sum_{\sigma: K(x_n)/K
    \hookrightarrow \bKv } \log | \sigma (x_n) - \alpha |_v\\
 & \text {(by \eqref{global}, \eqref{zero}, and Thm.~\ref{comp})
  }\\
  & = \lim_{n \rightarrow \infty} \sum_{v \in S} \frac{1}{[ K(x_n) :
    {\mathbb Q}]} \sum_{\sigma: K(x_n)/K
    \hookrightarrow \bKv } \log | \sigma (x_n) - \alpha |_v
  \; \; \text{(switching sum and limit)} \\
  &= \lim_{n \rightarrow \infty} \sum_{\text{places $v$ of $K$}}
  \frac{1}{[ K(x_n) : {\mathbb Q}]} \sum_{\sigma: K(x_n)/K
    \hookrightarrow \bKv } \log | \sigma (x_n) - \alpha |_v
  \quad \text{(by \eqref{zero})}\\
  & = 0 \quad \quad \quad \text{(by the product formula)}.
\end{split}
\end{equation}
Since $\alpha$ is not preperiodic, however, we have $\hhat(\alpha)
> 0$.  Thus, we have a contradiction, so ${\mathbb A^1}_{\varphi,
  \alpha, S}$ must be finite.
\end{proof}

\subsection{A variant of the Chebyshev dynamical systems}

We look at different Chebyshev polynomials defined by the
following recursion formula: 
$$
Q_1 (z) := z, \;\;\; Q_2 (z) := z^2 + 2;  \;\; \textup{and}
$$ 
$$
Q_{m+1} (z) - Q_{m-1} (z) = z Q_{m} (z) \;\; \textup{for all} \;
m \geq 2. 
$$
The dynamical system induced by any of the $Q_{m}$ 
($m \geq 2$) on ${\mathbb A^{1}}$ (or
${\mathbb P^1}$) has properties similar to those 
for the Chebyshev dynamical systems, for instance:
\begin{enumerate}
\item[{\rm (i)}] The Julia set is equal to the interval
$[-2, 2]$ on the $y$-axis;

\item[{\rm (ii)}] The preperiodic points are (either $\infty$ or)
the points of type $\zeta - \zeta^{-1}$, where $\zeta$ is a root of
unity.

\item[{\rm (iii)}] The corresponding measures $\mu_v$ satisfy
\begin{eqnarray}
\int_{ {\mathbb P^1} (\mathbb C_v)} \log |z-\alpha|_v \; d\mu_v = 
\left\{
\begin{array}{ll}  \log \max \{ |\alpha|_v, \; 1 \}, & 
\text{if $v \not | \infty$}; \\
{ \frac{1}{\pi} } \int_{-2}^{2} 
{ \frac{1} { \sqrt {4-y^2} } }
\log | yi - \alpha |_v \; dy, & \text{otherwise}
\end{array} \right. \nonumber
\end{eqnarray}
where $\alpha \in K$ ($K$ a number field), $v$ is a place of $K$, and  
$dy$ is the usual Lebesgue measure on $[-2, 2]$. Note that the measure
$\mu_v$ ($v | \infty$) is supported on the interval $[-2, 2]$ on the 
$y$-axis.

\end{enumerate}

\noindent It is then easy to see that arguments similar to the above 
prove the following:

\begin{thm} Let $\psi$ be any of the $Q_m$ ($m \geq 2$). 
Let $K$ be a number field, and let $S$ be a finite set of
places of $K$, containing all the infinite ones. Suppose that 
$\alpha \in K$ is not of type $\zeta - \zeta^{-1}$ for any root of 
unity $\zeta$. Then the following set
$$ {\mathbb A^1}_{\psi, \alpha, S} :=
\{ z \in {\overline {\mathbb Q}} : z \; 
\textup{{\em{is $S$-integral with respect to}}}
\; \alpha \; \textup{{\em{and is}}} \; 
\textup{${\psi}$-{\em{preperiodic}}} \}
$$
is finite.
\end{thm}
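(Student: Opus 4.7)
The plan is to mirror the proof of Theorem~\ref{main} almost verbatim, first establishing an analog of Theorem~\ref{comp} for $\psi = Q_m$ and then deducing the finiteness statement from the product formula and global positivity of $\hhat$. Concretely, I would first prove: for any nonrepeating sequence $(z_n)_{n=1}^\infty$ of $\psi$-preperiodic points, any non-preperiodic $\alpha \in K$, and any place $v$,
\begin{equation*}
\hhat_v(\alpha) = \lim_{n \to \infty} \frac{1}{[K(z_n):K]} \sum_{\sigma : K(z_n)/K \hookrightarrow \bKv} \log |\sigma(z_n) - \alpha|_v .
\end{equation*}
Writing each preperiodic point as $z_n = \zeta_n - \zeta_n^{-1}$ for a root of unity $\zeta_n$, one has the factorization $z_m - z_n = (\zeta_m - \zeta_n)(1 + (\zeta_m\zeta_n)^{-1})$, which is the direct analog of the identity used in Proposition~\ref{prev}.

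For $v \nmid \infty$, the argument of Proposition~\ref{prev} carries over: if $|\alpha|_v > 1$ then $\hhat_v(\alpha) = \log|\alpha|_v$ and the equality is immediate by property (iii); if $|\alpha|_v \le 1$, then $|z_m - \alpha|_v, |z_n - \alpha|_v \le r < 1$ forces $|\zeta_m - \zeta_n|_v \le \sqrt r$ or $|1 + (\zeta_m\zeta_n)^{-1}|_v \le \sqrt r$ via the above factorization, and in either case the relevant root of unity must have prime power order dividing the residue characteristic of $v$, leaving only finitely many possibilities. For $v \mid \infty$ and $\alpha$ outside the Julia set $\{iy : y \in [-2,2]\}$, one truncates $\log|z - \alpha|$ to a bounded continuous function on $\mathbb{C}$ that agrees with it on the Julia set, then applies Bilu's equidistribution theorem exactly as in the intermediate proposition of Section 4.

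The hard case, as expected, is $v \mid \infty$ with $\alpha$ on the Julia set. Write $\alpha = 2i\sin(2\pi\theta_0)$ and $\sigma(z_n) = 2i\sin(2\pi a/N)$. The counting Lemma~\ref{lem;counting} is replaced by its sine analog: since $\frac{d}{dy}\sin^{-1}(y/2) = \frac{1}{\sqrt{4 - y^2}}$, the same density $\frac{1}{\pi}\frac{1}{\sqrt{4-y^2}}$ appears, so the Mean Value Theorem yields the bound \eqref{M} for intervals on the $y$-axis. After the equidistribution step reduces matters to controlling $\sum_{\sigma(z_n) \text{ near }\alpha} \log|\sigma(z_n) - \alpha|$, one estimates $|2i\sin(2\pi a/N) - 2i\sin(2\pi\theta_0)|$ from below by $|a/N - \theta_0|$ times a bounded factor (using $|\sin u - \sin v| \asymp |u - v|$ for $u,v$ in a compact region avoiding the critical points where the derivative vanishes, which is ensured by the non-preperiodicity of $\alpha$), and applies Baker's Theorem~\ref{thm;baker} to conclude $\log|\sigma(z_n) - \alpha| \ge -C_4\log N + O(1)$. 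Subdividing $[\alpha - i\delta, \alpha + i\delta]$ into $D$ equal pieces and choosing $D = \lfloor N^{1/4}\rfloor$, as in Proposition~\ref{most}, makes the exceptional contribution negligible.

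Once the analog of Theorem~\ref{comp} is in hand, the derivation of the theorem is identical to the derivation of Theorem~\ref{main} from Theorem~\ref{comp}: assuming the set is infinite, extract a nonrepeating sequence $(z_n)$ of $S$-integral $\psi$-preperiodic points, extend $S$ so that $|\alpha|_v \le 1$ for $v \notin S$ (whence $\log|\sigma(z_n) - \alpha|_v = 0$ for $v \notin S$), then swap the finite sum over $S$ with the limit, extend the sum to all places using the vanishing outside $S$, apply the product formula to get $0$, and conclude $\hhat(\alpha) = 0$, contradicting the non-preperiodicity of $\alpha$. The only real obstacle is the Julia-set archimedean case, and there the verification that the sine analog of the Chebyshev identities plays symmetrically with the cosine version is what needs to be checked carefully; everything else is formal transcription.
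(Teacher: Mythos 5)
Your proposal is correct and matches the paper's intended approach: the paper itself only states the analogous facts (Julia set $i[-2,2]$, preperiodic points $\zeta-\zeta^{-1}$, the measure formula) and then asserts that "arguments similar to the above prove the following," and your transcription — the factorization $z_m - z_n = (\zeta_m - \zeta_n)(1 + (\zeta_m\zeta_n)^{-1})$ replacing the minus sign for the nonarchimedean case, Bilu's theorem for archimedean $\alpha$ off the Julia set, and the sine parametrization $\alpha = 2i\sin(2\pi\theta_0)$ with $\frac{d}{dy}\sin^{-1}(y/2) = \frac{1}{\sqrt{4-y^2}}$ feeding into the counting lemma and Baker's theorem — is exactly what the authors have in mind. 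The only imprecision is minor and shared with the paper's own Chebyshev argument: the lower bound $|\sin(2\pi a/N) - \sin(2\pi\theta_0)| \gtrsim$ (distance to nearest preimage) requires treating $a/N$ near $1/2 - \theta_0$ as well as near $\theta_0$, with a second application of Baker; and the excluded points for the $Q_m$ version are $\pm 2i$ (the critical values/endpoints) and $0$, all ruled out by non-preperiodicity.
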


\def\cprime{$'$} \def\cprime{$'$} \def\cprime{$'$}
\providecommand{\bysame}{\leavevmode\hbox to3em{\hrulefill}\thinspace}
\providecommand{\MR}{\relax\ifhmode\unskip\space\fi MR }
\providecommand{\MRhref}[2]{%
  \href{http://www.ams.org/mathscinet-getitem?mr=#1}{#2}
}
\providecommand{\href}[2]{#2}

\end{document}